\newtheorem{theorem}{Theorem}[section]
\newtheorem{corollary}{Corollary}[section]
\newtheorem{remark}{Remark}[section]
\newtheorem{example}{Example}[section]
\newtheorem{definition}{Definition}[section]
\newtheorem{lemma}[theorem]{Lemma}
\newtheorem{proposition}[theorem]{Proposition}
\newcommand{\R}{\mathbb{R}}
\newcommand{\room}{{\hspace{.2cm}}}
\newcommand{\concat}[2]{{#1}{#2}} 
\newcommand{\note}[1] {{}}
\newcommand{\old}[1]{{\color{red!40}{}}}
\begin{document}

	\renewcommand{\thefootnote}{\fnsymbol{footnote}}
	\footnotetext[1]{equal contribution}
	\footnotetext[2]{Department of Mathematics, UC San Diego (s5robert@ucsd.edu, dhkohli@ucsd.edu, acloninger@ucsd.edu)}
	\footnotetext[4]{Halicio\u{g}lu Data Science Institute, UC San Diego (gmishne@ucsd.edu).}

	\title[The Beckmann Problem on Connection Graphs]{On a Generalization of Wasserstein Distance and the Beckmann Problem to Connection Graphs}
	\author[S. Robertson, D. Kohli, G. Mishne, A. Cloninger]{Sawyer Robertson${}^{\ast, \dagger}$, Dhruv Kohli${}^{\ast,\dagger}$, Gal Mishne${}^\S$, Alexander Cloninger${}^{\dagger, \S}$}

	\renewcommand{\thefootnote}{\arabic{footnote}}

	\begin{abstract}
        We propose a model of optimal parallel transport between vector fields on a connection graph, which consists of a weighted graph along with a map from its edges to an orthogonal group. Inspired by the well-known equivalence of 1-Wasserstein distance and minimum cost flows on standard graphs, we consider two versions of this problem: a minimum norm vector-valued flow problem with divergence constraints reflective of the connection structure of the graph; and a modified version which incorporates both quadratic regularization and a relaxation of the divergence constraint. Our theoretical contributions include: conditions for feasibility and computation of the Lagrangian dual problem for both problems, and duality correspondence for the relaxed-regularized version. Example applications of the model including transport between color images, vector field interpolation, and unsupervised clustering of vector field-valued data (in this case hurricane trajectory data) are also considered. 
	\end{abstract}

	\keywords{graph connection Laplacian, optimal transport, convex analysis}
	\subjclass[2020]{65K10, 05C21, 90C25, 68R10, 05C50}
	
	\maketitle
	
	\section{Introduction and Related Work}\label{sec:introduction}
	
	Optimal transport involves defining and computing measures of discrepancy between probability distributions which, generically speaking, can be understood as modeling the cost of moving mass between distribution-modeled locations. Historically, optimal transport of probability measures started with the Monge formulation in 1781~\cite{monge1781memoire} and later evolved to include the Kantorovich relaxation~\cite{kantorovich1942translocation}. In recent times, the field of optimal transport has attracted considerable attention from computational researchers~\cite{PC-19} for its wide variety of applications; including transportation network design~\cite{lonardi2022multicommodity}, community detection~\cite{cloninger2019people, leite2022community}, supervised and unsupervised learning on point cloud valued data~\cite{moosmuller2023linear, khurana2023supervised, cloninger2023linearized}, and image processing and computer vision~\cite{papadakis2015optimal,bonneel2023survey} to name a few.
	
	The subfield of discrete optimal transport~\cite{solomon2018optimal, ferradans2014regularized} pertains to the class of problems that involve either discrete probability measures or transportation between measures defined on discrete spaces. These problems have attracted interest due to, among other reasons, their prevalence in the implementations of various optimal transport problems on continuous spaces and their applications. The Beckmann problem, named for its resemblance to continuous transportation models originally considered by Beckmann~\cite{beckmann1952continuous}, is a well-known model for optimal transport between densities on graphs~\cite{PC-19} and leverages insights from geometric and dynamic optimal transport frameworks~\cite{S-15}. This approach avoids computing the shortest path distances between all node pairs and capitalizes on the often sparse edge structure of a graph to improve the scalability of computing Wasserstein distance, i.e., the cost of optimal transport (see~\cite{solomon2018optimal}).
	
	In this paper we generalize the Beckmann problem to a class of graphs known as connection graphs, which can be understood in this case as undirected and weighted graphs equipped with an orthogonal matrix on each edge (see~\cref{def:connection-graphs}). This class of graphs has received interest in recent years for a variety of applications, including angular synchronization problems~\cite{singer2011angular}, Cheeger constants~\cite{bandeira2013cheeger}, graph effective resistance and random walks~\cite{cloninger2024random}, graph embedding algorithms~\cite{singer2012vector}, cryo-electron microscopy~\cite{bhamre2015orthogonal}, graph neural network models~\cite{barbero2022sheaf}, and solving jigsaw puzzles~\cite{huroyan2020solving} (connection graphs generalize traditional weighted graphs; a $1$-dimensional connection graph with trivial connection is just an undirected graph).
	
	Meanwhile, transportation problems involving vector fields on graphs have versatile applications, as exemplified by three scenarios. First, as established by Singer and Wu~\cite{singer2012vector}, when the nodes of graph $G$ are positioned on or near a $d$-dimensional Riemannian manifold within $\mathbb{R}^n$, the connection Laplacian can effectively model interactions occurring on the tangent bundle of this manifold. This suggests that transport on a connection graph would offer a discrete approach to simulating parallel transport of vector fields along the tangent bundle of a Riemannian manifold. Secondly, building on insights from Lieb and Loss~\cite{lieb2004fluxes}, if the nodes of the graph $G$ are situated within a magnetic field, features on the edges can be used to model rotational flux between two specific points. In this context, transport within the connection graph can be interpreted as the movement of vector fields which are affected by rotational forces as transportation is carried out. Lastly, as introduced by Ryu et al.~\cite{RCLO-18}, when the nodes of the graph $G$ correspond to pixels in an image, irrespective of whether there is additional connection or rotation structure, vector-valued densities can be used to model the image data. Consequently, the transportation cost between these densities leads to the so-called color Earth mover's distance~\cite{solomon2014earth, li2016fast, li2018parallel} for understanding differences in color distribution of images. These three applications underscore the interdisciplinary appeal of transportation problems between vector fields on connection graphs for modeling various physical phenomena and systems.
	
	\subsection{Summary of Main Results}
 
    This paper contains three novel theoretical contributions. First, we propose an  approach to optimal transport between vector fields on connection graphs, formulated as follows (see~\cref{subsec:optimal-transport-on-graphs-and-connection-graphs} for a detailed discussion):
		\begin{align}\label{eq:connection-beckmann-formulation-1}
			\mathcal{W}_1^\sigma(\alpha,\beta)= \inf_{J:E'\rightarrow\mathbb{R}^d} \left\{ \sum_{e\in E'} w(e)\|J(e)\|_2  : B J = \alpha - \beta\right\},
		\end{align}
    where $w(e)$ is the weight of the edge $e\in E'$ (see~\cref{eq:index-oriented-edges}), $B$ is the connection incidence matrix (see~\cref{eq:incidence-matrix}) which also encodes the rotational component $\sigma$, and $\alpha,\beta:V\rightarrow \mathbb{R}^d$ are fixed $d$-vector fields on $V$. We then provide a detailed theoretical analysis of its feasibility, which is crucial due to the fact that, unlike conventional graphs, establishing a feasible ``flow" $J$ as in~\cref{eq:connection-beckmann-formulation-1} may not be attainable for two vector fields. Second, we study the resulting duality theory and establish strong duality and duality correspondence for the Beckmann problem on connection graphs (see~\cref{th:strong-duality}). Third, we propose a ``relaxed-regularized'' version of the problem by introducing a quadratic regularization term into the objective of~\cref{eq:connection-beckmann-formulation-1} in addition to an $\infty$-norm relaxation of the linear constraint. Then, we provide a detailed analysis of the resulting duality theory, particularly duality correspondence (see~\cref{thm:rr-strong-duality}), which allows one to convert between solutions for the primal and dual versions of the problem. This is particularly helpful in~\cref{sec:examples} by enabling a scalable gradient descent-based approach to calculating the optimal cost and flow between any two densities. 
	
	\subsection{Organization of this paper}
	
	In~\cref{subsec:graph-theory-preliminaries}, we introduce the relevant terminology and graph-theoretic preliminaries needed for our work. In~\cref{subsec:optimal-transport-on-graphs-and-connection-graphs} we review the discrete optimal transport theory and the Beckmann problem on graphs and present a natural extension of the Beckmann problem to encompass connection graphs. Then, in~\cref{sec:strong-duality-graphs}, we examine the feasibility of the problem in detail. In~\cref{sec:quadratic-regularization} we establish strong duality for the parallel transport problem and formulate a strictly convex version by incorporating quadratic regularization and a relaxation of the feasibility region. Moreover, we re-establish strong duality and elucidate the process of converting a solution to the dual problem (a potential function on the nodes) to a distinct solution of the primal problem (a flow along the edges). Finally, in~\cref{sec:examples} we solve the relaxed-regularized problem in various settings and consider applications which include color image processing, vector field interpolation, and unsupervised clustering of vector field-valued data (in this case hurricane trajectory data obtained from the HURDAT2 dataset \cite{landsea2015revised}).

	\subsection{Graph theory preliminaries}\label{subsec:graph-theory-preliminaries}
	We start by fixing a finite, connected, weighted, and undirected graph $G=(V,E,w)$. Here, for convention we assume $V = \{1,2,\dotsc,n\}$ for some $n\geq 2$, $E\subseteq {V\choose 2}$, and $|E| = m$ for some $m\geq 1$. The  weights $w = (w_{ij})_{i,j\in V}$ are assumed to be symmetric nonnegative real numbers such that $w_{ij} > 0$ if and only if $\{i,j\}\in E$.
	For each $i,j\in V$ we write $i\sim j$ if $\{i,j\}\in E$ and for each node $i\in V$ we define its degree:
	\begin{align}d_i = \sum_{j\sim i} w_{ij}.\end{align}
	
	A \textit{path} is an ordered tuple of adjacent nodes $P = (i_1,i_2,\dotsc,i_{k})$ where $i_\ell\sim i_{\ell+1}$ for $1\leq \ell \leq {k}-1$. The length of a path is the number of (possibly non-unique) edges its entries contain. A \textit{cycle} $C$ is a path where $i_1 = i_{k}$. If $P$ is a path as before and $Q = (j_1, j_2, \dotsc, j_{k'})$ is a second path satisfying $i_{k}\sim j_1$, then $P + Q$ is defined to be the concatenated path
    	\begin{align}
            P + Q = (i_1,\dotsc, i_{k}, j_1,\dotsc, j_{k'}).
        \end{align}
	For brevity, and only in subscripts, we may use the notation $\concat{P}{Q}$ in lieu of $P + Q$. Lastly we define $P^{-1}$ to be the path $P$ with reversed orientation, namely $P^{-1} = (i_{k},i_{{k}-1},\dotsc, i_1)$.
	
	For a fixed graph $G=(V,E,w)$ we define $E'$ to be the index-oriented edge set, namely
		\begin{align}\label{eq:index-oriented-edges}
			E' = \{(i,j) : i<j, i\sim j\}.
		\end{align}
	The choice of orientation/enumeration used to construct $E'$ has no impact on the subsequent results. This is done solely to have consistent definitions for incidence matrices and their induced operators. For $d\geq 1$, the notation $\mathbb{O}(d)$ refers to the group of $d\times d$ orthogonal matrices; i.e., matrices $O\in\R^{d\times d}$ such that $O^\top O = OO^\top = I_d$. 
	
	\begin{definition}\label{def:connection-graphs}
		A {\normalfont connection} is a map $\sigma:E'\rightarrow \mathbb{O}(d)$ for some $d\geq 1$. By convention we set $\sigma((i, j)) =: \sigma_{ij}$ and $\sigma_{ji} = \sigma_{ij}^{-1} = \sigma_{ij}^\top$ for each $(i, j)\in E'$. A pair $(G,\sigma)$ is called a {\normalfont connection graph}.
	\end{definition}
	
	Let $(G,\sigma)$ be a fixed connection graph. If $P = (i_1,\dotsc,i_{k})$ is a path, we define the path product $\sigma_P$ (chosen with respect to right multiplication) by
	\begin{align}\sigma_P = \prod_{\ell=1}^{{k}-1}\sigma_{i_\ell, i_{\ell+1}}.\end{align}
	It is useful to observe that if $P,Q$ are paths in $G$ then $\sigma_{PQ} = \sigma_P \sigma_Q$ and $\sigma_{P^{-1}} = \sigma_P^{-1}$.
	
	The connection incidence matrix $B\in\R^{nd\times md}$ is the block matrix defined by \note{Changed convention for incidence matrix to exclude weights}
	\begin{align}\label{eq:incidence-matrix}
		B = (B_{ie})_{i\in V, e\in E'},\room B_{ie} =
		\begin{cases}
			I_d&\text{ if }e = (i,\cdot)\\
			-\sigma_e^\top &\text{ if }e = (\cdot, i)\\
			0&\text{ otherwise.}
		\end{cases}
	\end{align}
	
	Where context is clear we may use $B$ to refer to the conventional incidence matrix of the underlying graph $G$, for which the definition above can be taken with $d=1$ and $\sigma\equiv 1$. Setting $W\in\R^{md\times md}$ to be the block diagonal matrix of edge weights; namely, $W = \operatorname{diag}(w_{e_1}I_d, \dotsc, w_{e_m}I_d)$, then the connection Laplacian matrix $L\in\R^{nd\times nd}$ is defined by the equation $L = BWB^{T}$, or blockwise by the equation
	\begin{align}
		L = (L_{ij})_{i,j\in V},\room L_{ij} =
		\begin{cases}
			d_iI_d&\text{ if } i=j\\
			-w_{ij}\sigma_{ij} &\text{ if }i\sim j\\
			0_{d\times d}&\text{ otherwise}
		\end{cases}
	\end{align}
	where $L_{ij}\in\mathbb{R}^{d\times d}$ for each $i,j\in V$. If $D = \text{diag}(d_1,\dotsc, d_n)$ and $A$ is the adjacency matrix of $G=(V,E)$, then $\Delta=D-A$ is the standard combinatorial Laplacian matrix of $G$.
	Both $L$ and $\Delta$ are symmetric and positive semi-definite and hence have nonnegative eigenvalues.
	
	We use the notation $\ell(V;\R^d)$ to denote the vector space of functions $f:V\rightarrow\R^d$, identified with their column vector representations
	\begin{align}
		f =
		\begin{bmatrix}
			f(1)\\
			f(2)\\
			\vdots\\
			f(n)
		\end{bmatrix}
		\in \R^{nd\times 1}
	\end{align}
	and which we equip with the standard real $\ell_2$ scalar product
	\begin{align}\langle f,g \rangle = f^\top g\end{align}
	for each $f,g\in \ell(V;\R^d)$. We similarly define $\ell(E';\R^d)$ in the obvious manner.

    \note{We removed the gradient and divergence notations from the paper for clarity.}

	The remainder of this subsection covers some terminology and properties related to connection consistency.
	
	\begin{definition}\label{balance}
		Let $(G,\sigma)$ be a connection graph. Then $G$ is said to be {\normalfont consistent} if for any cycle $C = (i_1, i_2,\dotsc, i_{n}, i_{n} = i_1)$ where $i_{\ell}\sim i_{\ell+1}$ for each $1\leq \ell\leq n-1$, it holds that $\sigma_{C} = I_d$.
	\end{definition}

    A connection $\sigma$ is called \textit{inconsistent} if it is not consistent. The terminology \textit{balanced} is also sometimes used in the magnetic and signed graph literature (e.g.,~\cite{cartwright1956structural}), however we opt for the terminology consistent as it appears somewhat more common in the connection graph literature (e.g.,~\cite{cloninger2024random}).
 
	\begin{definition}
		\label{def:tau_switched}
		Let $(G,\sigma)$ be a connection graph. If $\tau:V\rightarrow\mathbb{O}(d)$ is any function, then we define the {\normalfont $\tau$-switched connection} $\sigma^\tau$ via the equation
		\begin{align}\sigma^\tau_{ij} = \tau(i)^{-1}\sigma_{ij}\tau(j).\end{align}
	\end{definition}

	\begin{definition}
		\label{def:swtiching_equiv}
		Two connections $\rho$ and $\sigma$ on $G$ are called {\normalfont switching equivalent} if there exists some $\tau$ for which $\rho=\sigma^\tau$.
	\end{definition}

	To offer the reader some context and intuition for consistent signatures, we recall the following theorem from~\cite[Theorem 1]{CZK-14}:
	\begin{theorem}\label{balance-conditions}
		Let $(G,\sigma)$ be a connection graph.
		Then the following statements are equivalent:
		\begin{enumerate}[label=\textit{(\roman*)}]
			\item $G$ is consistent.
			\item $L$ has kernel of dimension $d$.
			\item The eigenvalues of $L$ are exactly the eigenvalues of the combinatorial laplacian matrix $\Delta$ of $G$ each occuring with multiplicity $d$.
			\item There exists a map $\tau:V\rightarrow \mathbb{O}(d)$ such that $\tau(i) = \sigma_{ij}\tau(j)$ for each $i,j\in V$.
			\item $\sigma$ is switching equivalent to the trivial connection $\iota:(i,j)\mapsto I_d$.
		\end{enumerate}
	\end{theorem}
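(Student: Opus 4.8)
The plan is to establish the cycle of implications $(i)\Rightarrow(iv)\Rightarrow(iii)\Rightarrow(ii)\Rightarrow(iv)$ together with the equivalences $(iv)\Leftrightarrow(v)$ and the implication $(iv)\Rightarrow(i)$; these together close the loop among all five statements. The one step used repeatedly afterward is an explicit description of $\ker L$. Since $L = BWB^{T}$ with $W$ positive definite, $x^{T}Lx = \|W^{1/2}B^{T}x\|_{2}^{2}$, so $x\in\ker L$ if and only if $B^{T}x = 0$, which by \cref{eq:incidence-matrix} means exactly that $x(i) = \sigma_{ij}x(j)$ for every $(i,j)\in E'$; call such an $x$ a \emph{parallel section}. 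Because $G$ is connected, a parallel section is determined by its value $x(1)$ at a fixed root (propagate $x(j) = \sigma_{ij}^{-1}x(i)$ along any path to the root), so the evaluation $x\mapsto x(1)$ is an injection $\ker L\hookrightarrow\mathbb{R}^{d}$; hence $\dim\ker L\le d$ always, with equality exactly when every $v\in\mathbb{R}^{d}$ equals $x(1)$ for some parallel section.

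For $(i)\Rightarrow(iv)$ I would fix a spanning tree of $G$ rooted at $1$, set $\tau(1)=I_{d}$, and define $\tau$ recursively down the tree by $\tau(i)=\sigma_{ij}\tau(j)$ whenever $j$ is the parent of $i$; each $\tau(i)$ is a product of orthogonal matrices, hence lies in $\mathbb{O}(d)$, and the relation in $(iv)$ holds by construction on tree edges. For a non-tree edge $\{i,j\}$, consider the fundamental cycle $C$ that runs from the root down the tree to $i$, across $\{i,j\}$, and back up the tree to the root; unwinding the definition of $\tau$ gives $\sigma_{C}=\tau(i)^{-1}\sigma_{ij}\tau(j)$, so the consistency hypothesis $\sigma_{C}=I_{d}$ rearranges to exactly $\tau(i)=\sigma_{ij}\tau(j)$. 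The equivalence $(iv)\Leftrightarrow(v)$ is then a direct rearrangement of \cref{def:tau_switched}: $\tau(i)=\sigma_{ij}\tau(j)$ for all edges says precisely that $\sigma^{\tau}_{ij}=\tau(i)^{-1}\sigma_{ij}\tau(j)=I_{d}$, i.e.\ $\sigma^{\tau}=\iota$, and this is the assertion that $\sigma$ and $\iota$ are switching equivalent. For $(v)\Rightarrow(i)$, write $\sigma_{ij}=\tau(i)\tau(j)^{-1}$ and observe that along any cycle $C=(i_{1},\dots,i_{k}=i_{1})$ the product $\sigma_{C}=\prod_{\ell}\tau(i_{\ell})\tau(i_{\ell+1})^{-1}$ telescopes to $\tau(i_{1})\tau(i_{k})^{-1}=I_{d}$.

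For $(iv)\Rightarrow(iii)$ I would take $\tau$ as in $(iv)$, form the orthogonal block-diagonal matrix $T=\operatorname{diag}(\tau(1),\dots,\tau(n))\in\mathbb{O}(nd)$, and check blockwise that $T^{-1}LT$ is the connection Laplacian of the switched connection $\sigma^{\tau}$ with the same weights; since $\sigma^{\tau}=\iota$ this is the block matrix with $d_{i}I_{d}$ on the diagonal and $-w_{ij}I_{d}$ off the diagonal, which (in the vertex-major ordering used here) is $\Delta\otimes I_{d}$. Thus $L$ is orthogonally similar to $\Delta\otimes I_{d}$, giving the spectral statement $(iii)$, and $(iii)\Rightarrow(ii)$ is immediate: connectedness of $G$ makes $0$ a simple eigenvalue of $\Delta$, hence one of multiplicity exactly $d$ in $L$. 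Finally, for $(ii)\Rightarrow(iv)$ I use the kernel description: $\ker L$ is contained in the $d$-dimensional space $S$ of functions satisfying the parallel relation on the tree edges alone, so $\dim\ker L=d$ forces $\ker L=S$; hence every tree-propagated function is a genuine parallel section, and writing each $x\in S$ as $x(i)=\tau(i)x(1)$ with $\tau(i)\in\mathbb{O}(d)$ the composed tree-propagation map, the parallel relation across each non-tree edge yields $\tau(i)=\sigma_{ij}\tau(j)$ for all edges, which is $(iv)$.

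I expect the main obstacle to be $(ii)\Rightarrow(iv)$ — converting the purely spectral hypothesis $\dim\ker L=d$ into the structural conclusion — and, throughout, keeping the transpose/inverse conventions aligned: the convention $\sigma_{ji}=\sigma_{ij}^{-1}$, the sign pattern of $B$ in \cref{eq:incidence-matrix}, and the switching formula $\sigma^{\tau}_{ij}=\tau(i)^{-1}\sigma_{ij}\tau(j)$ must all fit together so that ``fundamental cycle is trivial'' is literally the relation in $(iv)$ and ``$T^{-1}LT$'' is literally $L^{\sigma^{\tau}}$. One should also take care in $(i)\Rightarrow(iv)$ and $(ii)\Rightarrow(iv)$ to argue directly via the spanning tree and parallel sections rather than via a cycle-space decomposition, since $\mathbb{O}(d)$ is nonabelian for $d\ge 2$; none of the individual computations is deep, but assembling all five conditions without a sign slip is where the care is needed.
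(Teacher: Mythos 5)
Your argument is correct, and it is worth noting at the outset that the paper itself does not prove this statement: it is imported verbatim from \cite[Theorem 1]{CZK-14}, so there is no in-paper proof to compare against line by line. Your proof is the standard one and it is complete: the identification $\ker L=\ker(B^T W B^T$-free$)$, i.e.\ $x\in\ker L\iff B^Tx=0\iff x(i)=\sigma_{ij}x(j)$ on every edge, is exactly right given $L=BWB^T$ with $W\succ0$; the spanning-tree construction $\tau(i)=\sigma_{P_{i,1}}$ with fundamental cycles giving $\sigma_C=\tau(i)^{-1}\sigma_{ij}\tau(j)$ is consistent with the paper's right-multiplication convention for path products; the conjugation $T^{-1}LT=L^{\sigma^\tau}=\Delta\otimes I_d$ handles \textit{(iii)}; and the dimension-count argument for \textit{(ii)}$\Rightarrow$\textit{(iv)} (tree-propagated space $S$ has dimension $d$, contains $\ker L$, and equality forces the parallel relation on non-tree edges for all initial values $x(1)$) is sound. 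Your implication graph closes (the small mismatch between the announced \textit{(iv)}$\Rightarrow$\textit{(i)} and the proved \textit{(v)}$\Rightarrow$\textit{(i)} is immaterial given \textit{(iv)}$\Leftrightarrow$\textit{(v)}), and your use of connectedness in \textit{(iii)}$\Rightarrow$\textit{(ii)} and in the propagation arguments matches the paper's standing assumption that $G$ is connected. It is also worth observing that your machinery essentially anticipates what the paper later develops independently: the evaluation map $x\mapsto x(1)$ on $\ker(B^T)$ is the map $H$ of \cref{lemma:isomorphism-of-kernel-of-incidence}, and your tree-based $\tau(i)=\sigma_{P_{i,1}}$ is precisely the switching function constructed in \cref{lemma:constant-kernel-vectors}, so your proof is fully aligned with the paper's conventions and techniques even though the paper delegates this particular theorem to \cite{CZK-14}.
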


	\subsection{Optimal transport on graphs}\label{subsec:optimal-transport-on-graphs-and-connection-graphs}
	
	In this subsection we review optimal transport for probability densities on graphs via the Kantorovich problem. Generally speaking, optimal transport cost can be understood as the minimal amount of work, or cost, required to move a unit of mass in a ``pile" modeled by probability measure $\alpha$ on some underlying measure space to a ``ditch" modeled by a second probability measure $\beta$. Alternatively, Wasserstein distance (a specific optimal transport formulation) can be thought of as the mean $\ell_p$ distance between points sampled from densities $\alpha,\beta$ in pairs according to an ``optimal" probability distribution on the product space with marginals $\alpha, \beta$. More precisely, letting 
        \begin{align}\label{eq:probab-measure}
            \mathcal{P}(V) = \{f\in\ell(V;\mathbb{R}):f\geq 0, \sum_{i\in V}f(i)=1\},
        \end{align}
    the optimal transportion distance between two probability densities $\alpha,\beta \in\mathcal{P}(V)$  on a standard graph $G$ with pairwise shortest path distances given by $\{d_{ij}\}_{i,j \in V}$, can be defined as follows:
	
	\begin{align}\label{eq:earth-movers-distance}
		\mathcal{W}_1(\alpha,\beta)= \inf_{U\in\R^{n\times n}} \left\{\sum_{i,j=1}^{n}d_{ij}U_{ij} : U\geq 0, U\mathbf{1}_n = \alpha, U^\top\mathbf{1}_n = \beta\right\}.
	\end{align}
	
	$\mathcal{W}_1(\alpha,\beta)$ is called the 1-Wasserstein metric or the Earth mover's distance on $\mathcal{P}(V)$, and represents the optimal cost for transporting $\alpha$ to $\beta$ on $G$~\cite{PC-19, feldman2002monge, ling2007efficient}. The plan(s) $U$ achieving $\mathcal{W}_1(\alpha,\beta)$ are called optimal transportation plans and their entries $U_{ij}$, loosely speaking, encode how much mass from node $i$ is moved to node $j$.
	
	It so happens that whenever one is working on an undirected graph whose underlying metric is given by shortest-path distance, the Kantorovich problem coincides with the minimum cost flow problem as defined between the corresponding probability measures (for a proof and discussion, see~\cite{PC-19}). In other words, one has:
	
	\begin{align}\label{eq:earth-movers-distance-beckmann}
		\mathcal{W}_1(\alpha,\beta)= \inf_{J\in \ell(E';\R)} \left\{ \sum_{e \in E'} w(e)|J(e)| : BJ = \alpha-\beta \right\}.
	\end{align}
	
	Here the variable $J$ represents a signed flow along each edge, and the constraint $BJ = \alpha - \beta$ requires that the flow $J$ transport mass from $\alpha$ to $\beta$. The advantage of using this formulation lies in the fact that the graph $G$ inherently captures the local connectivity structure of the underlying metric space.  There exist a number of efficient algorithms for solving either~\cref{eq:earth-movers-distance} directly, or a regularized version of the problem, efficiently, including~\cite{ES-18, li2016fast, li2018parallel, liu2021multilevel, gao2022master}. 
	
	\subsection{Vector-valued optimal transport on connection graphs}

    In order to extend the above optimal transportation model from a undirected graph $G$ to a connection graph $(G, \sigma)$, two core issues arise: firstly, how to select an appropriate class of the so-called supply and demand vectors $\alpha$ and $\beta$ to define a meaningful optimal transportation cost $\mathcal{W}_1^\sigma(\alpha,\beta)$ (and which ideally should reduce to $\mathcal{W}_1(\alpha, \beta)$ when restricted to the setting of traditional graphs). Secondly, one must also elucidate the circumstances under which the optimal transport problem is feasible for a given pair of $\alpha$ and $\beta$ i.e. the conditions on $(G,\sigma)$, and perhaps on $\alpha$ and $\beta$, under which $\mathcal{W}_1^\sigma(\alpha,\beta) < \infty$.

    To address this pair of issues, we consider the case when the graph $G$ is a discretization of a $d$-dimensional Riemannian manifold. In this context, there exists a $d$-dimensional orthonormal basis of the tangent space at each node. If there is an edge between nodes $i$ and $j$, as proposed in~\cite{singer2012vector}, the connection $\sigma_{ij}$ can be interpreted as an approximation of the parallel transport operator, responsible for transporting vectors from the tangent space at $i$ to the tangent space at $j$ on the manifold. Thus it is natural to define a metric on the connection graph that perhaps captures the optimal cost of parallel-transporting one vector field on the graph to another. Consequently, for such a metric, the vectors $\alpha$ and $\beta$ would belong to the set of vector fields on the graph $G$ i.e. $\alpha,\beta:V\rightarrow\R^d$. Here, we propose the following variant of the transport problem in the connection setting that can be interpreted as the ``optimal parallel transport cost" between vector fields, henceforth termed the Beckmann problem on connection graphs:
		\begin{align}\label{eq:vector-valued-em}
			\mathcal{W}_1^\sigma(\alpha,\beta)= \inf_{J\in \ell(E';\R^d)} \left\{ \sum_{e\in E'} w(e)\|J(e)\|_2  : BJ = \alpha - \beta\right\}.
		\end{align}
    Here, each $J$ encodes a $d$-dimensional network flow, representing the simultaneous flow of mass between nodes in the graph, as well as across channels or dimensions. The constraint $BJ=\alpha-\beta$ amounts to requiring that the net flow of mass through each node, upon accounting for the rotations induced by $\sigma$ on the incident edges, agrees with the difference $\alpha-\beta$ exactly. Thus, the parallel transport problem for $\alpha,\beta$ is feasible i.e. $\mathcal{W}_1^\sigma(\alpha,\beta) < \infty$, provided there exists some $J\in\ell(E', \R^d)$ such that $BJ = \alpha-\beta$. In the setting of a connected undirected graph without a connection, equivalently a trivial connection, the Beckmann problem (Eq.~\cref{eq:earth-movers-distance-beckmann}) (or, if one prefers, the minimum cost flow problem) is feasible \textit{if and only if} $\alpha$ and $\beta$ have equal mass. Consequently, the problem is meaningful only when one is specifically concerned with probability densities. However, in the one-dimensional \textit{non-trivial} connection setting, there may exist probability densities $\alpha$ and $\beta$ of equal mass but no feasible flow (\cref{fig:example-1}), and there exist feasible flows between densities of unequal mass (\cref{fig:example-2}). We note that in higher dimensions $d>1$, the objective~\cref{eq:vector-valued-em} is distinct from simultaneous transportation across dimensions, since it does not decompose strictly into a sum of $\ell_1$ costs.

    \begin{figure}[htp]
		\centering
        \begin{subfigure}[b]{0.45\textwidth}
		\begin{tikzpicture}[scale=0.85]
			\draw (-2, 0) circle [radius=0.2];
			\draw (0, 0) circle [radius=0.2];
			\draw (2, 0) circle [radius=0.2];
			
			\draw[arrows ={-Latex[]}, black, thick, shorten >= 6pt, shorten <= 6pt] (-2, 0) -- (0, 0);
			\draw[arrows ={-Latex[]}, black, thick, shorten >= 6pt, shorten <= 6pt] (0, 0) -- (2, 0);
			
			\draw[fill=red!50] (-2, 2) circle [radius=0.15];
			\draw[red!50, thick, shorten >= 6pt, dashed, shorten <= 6pt] (-2, 0) -- +(0, 2);
			
			\draw[fill=blue!50] (2, 2) circle [radius=0.15];
			\draw[blue!50, thick, shorten >= 6pt, dashed, shorten <= 6pt] (2, 0) -- +(0, 2);
			
			\node at (-2, 0) {$1$};
			\node at (0, 0) {$2$};
			\node at (2, 0) {$3$};
			
			\node at (-1, -0.5) {$\operatorname{\sigma=1}$};
			\node at (1, -0.5) {$\operatorname{\sigma=-1}$};
			
			\node at (-2, 2.5) {$\alpha(1) = 1$};
			\node at (2, 2.5) {$\beta(3) = 1$};	
		\end{tikzpicture}
            \caption{}
		\label{fig:example-1}
	\end{subfigure}
        \begin{subfigure}[b]{0.45\textwidth}\centering
		\begin{tikzpicture}[scale=0.85]
			\draw (-2, 0) circle [radius=0.2];
			\draw (0, 2) circle [radius=0.2];
			\draw (2, 0) circle [radius=0.2];
			\draw (0, -2) circle [radius=0.2];
			
			\node at (-2, 0) {1};
			\node at (0, -2) {2};
			\node at (0, 2) {3};
			\node at (2, 0) {4};
			
			\draw[arrows ={-Latex[]}, black, thick, shorten >= 6pt, shorten <= 6pt] (-2, 0) -- (0, 2);
			\draw[arrows ={-Latex[]}, black, thick, shorten >= 6pt, shorten <= 6pt] (-2, 0) -- (0, -2);
			\draw[arrows ={-Latex[]}, black, thick, shorten >= 6pt, shorten <= 6pt] (0, 2) -- (2, 0);
			\draw[arrows ={-Latex[]}, black, thick, shorten >= 6pt, shorten <= 6pt] (0, -2) -- (2, 0);
			
			\node at (-0.75, 0.5) {$\operatorname{\sigma=-1}$};
                \node at (0.75, 0.5) {$\operatorname{\sigma=1}$};
                \node at (-1.5, -1.25) {$\operatorname{\sigma=1}$};
                \node at (1.5, -1.25) {$\operatorname{\sigma=1}$};
			
			\draw[fill=red!50] (-2, 2) circle [radius=0.15];
			\draw[red!50, thick, shorten >= 6pt, dashed, shorten <= 6pt] (-2, 0) -- +(0, 2);
			\node at (-2, 2.5) {$\alpha(1) = 1$};
			
			\draw[fill=blue!50] (2, 1) circle [radius=0.15];
			\draw[blue!50, thick, shorten >= 6pt, dashed, shorten <= 6pt] (2, 0) -- +(0, 1);
			\node at (2, 1.5) {$\beta(4) = 0.5$};
			
		\end{tikzpicture}
            \caption{}
		\label{fig:example-2}
	\end{subfigure}
	\captionsetup{width=.9\linewidth}
    \caption{Here, $d=1$ and $\sigma$ is a $\pm 1$ connection on each edge. The non-zero elements of $\alpha$ and $\beta$ are displayed. (a) Case where $\alpha$ and $\beta$ have equal mass but no feasible flow. As $\alpha$ undergoes parallel transport in the direction of node $3$, its sign is flipped, which is therefore not compatible with $\beta$. Instead, $\beta(3) = -1$ would result in a feasible problem. (b) Case where $\alpha$ and $\beta$ do not have equal total mass but the problem is feasible. Take $J = 0.25$ on the upper path and $J = 0.75$ on the lower path, and it follows that $BJ = \alpha-\beta$ due to~\cref{eq:incidence-matrix}.}
 \end{figure}
	
	
	\section{Feasibility for the Beckmann problem on connection graphs}\label{sec:strong-duality-graphs}
    \note{This section has been renamed and the strong duality for non-regularized Beckmann has been moved to the subsequent section.}
 
    We examine the feasibility of the connection Beckmann problem from two complementary perspectives: \textit{(i)} given a connection graph $(G, \sigma)$, what constraints on $\alpha$ and $\beta$ are needed for the problem to be feasible (see \cref{subsec:feasible-fixedgraph}); and \textit{(ii)} given that $\alpha$ and $\beta$ belong to a specific class (we use the example of vector-valued probability densities), how to construct a connection graph $(G,\sigma)$ that would ensure the problem's feasibility (see \cref{subsec:feasible-fixedcollection}).
	
 	\subsection{Feasibility for a fixed connection graph}\label{subsec:feasible-fixedgraph}

	\begin{proposition}\label{prop:general-feasibility-statement}
		Let $(G,\sigma)$ be a connected connection graph and let \break $\alpha,\beta\in\ell(V;\R^d)$. Then the optimal parallel transportation problem~\cref{eq:vector-valued-em} is feasible and $\mathcal{W}_1^\sigma(\alpha,\beta) < \infty$ if and only if $(\alpha-\beta)\perp \operatorname{ker}(L)$.
	\end{proposition}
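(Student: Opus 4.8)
The plan is to reduce the feasibility question to a statement about the range of the connection incidence matrix $B$, and then identify that range using the fundamental theorem of linear algebra together with the factorization $L = BWB^T$. First I would observe that the feasible set $\{J\in\ell(E';\R^d) : BJ = \alpha-\beta\}$ is nonempty exactly when $\alpha-\beta\in\operatorname{range}(B)$, and that whenever it is nonempty the objective $J\mapsto\sum_{e\in E'}w(e)\|J(e)\|_2$ is nonnegative, so its infimum over the (nonempty) feasible set is a finite nonnegative number; conversely, if the feasible set is empty then $\mathcal{W}_1^\sigma(\alpha,\beta)=\infty$ by the usual convention for an infimum over the empty set. Thus the event ``the problem is feasible and $\mathcal{W}_1^\sigma(\alpha,\beta)<\infty$'' is the single condition $\alpha-\beta\in\operatorname{range}(B)$.

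Next I would show $\operatorname{range}(B)=\operatorname{ker}(L)^\perp$. Since $B$ is a real $nd\times md$ matrix, the fundamental theorem of linear algebra gives $\operatorname{range}(B)=\operatorname{ker}(B^T)^\perp$, so it suffices to prove $\operatorname{ker}(B^T)=\operatorname{ker}(L)$. The inclusion $\operatorname{ker}(B^T)\subseteq\operatorname{ker}(BWB^T)=\operatorname{ker}(L)$ is immediate. For the reverse inclusion, if $Lx=BWB^Tx=0$ then $0=x^T BWB^Tx=(B^Tx)^T W (B^Tx)$, and since $W=\operatorname{diag}(w_{e_1}I_d,\dotsc,w_{e_m}I_d)$ is positive definite (all edge weights are strictly positive), this forces $B^Tx=0$. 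Hence $\operatorname{ker}(L)=\operatorname{ker}(B^T)$ and therefore $\operatorname{range}(B)=\operatorname{ker}(L)^\perp$.

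Combining the two steps, the problem is feasible with finite cost if and only if $\alpha-\beta\in\operatorname{range}(B)=\operatorname{ker}(L)^\perp$, i.e.\ $(\alpha-\beta)\perp\operatorname{ker}(L)$, which is the claim. I do not anticipate a genuine obstacle here; the only points requiring care are the bookkeeping around the empty-feasible-set convention (so that ``feasible and $\mathcal{W}_1^\sigma<\infty$'' really collapses to one condition) and making explicit that strict positivity of the edge weights is exactly what makes $W$ positive definite and hence validates the identification $\operatorname{ker}(L)=\operatorname{ker}(B^T)$. Note that connectedness of $G$ is not actually used in this argument — it becomes relevant only later when one wants to describe $\operatorname{ker}(L)$ concretely, e.g.\ via \Cref{balance-conditions} — but it is harmless to retain it in the hypotheses.
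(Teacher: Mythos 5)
Your proposal is correct and follows essentially the same route as the paper's proof: both reduce feasibility to $\alpha-\beta\in\operatorname{range}(B)$ and then use $\operatorname{range}(B)=\operatorname{ker}(B^T)^\perp$ together with $\operatorname{ker}(B^T)=\operatorname{ker}(BWB^T)=\operatorname{ker}(L)$. Your explicit justification of the latter identity via positive definiteness of $W$ is a detail the paper leaves implicit, but it is the same argument.
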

	
	\begin{proof}
		It is straightforward to observe that $\mathcal{W}_1^\sigma(\alpha,\beta) < \infty$ if and only if there exists some $J\in\ell(E;\R^d)$ such that $BJ = \alpha-\beta$, i.e., $\alpha-\beta\in\mathcal{R}(B)$ where $\mathcal{R}(B)\subseteq \ell(V;\R^d)$ is the range of of $B$ as a linear transformation $\ell(E;\R^d)\rightarrow\ell(V;\R^d)$. Since $\mathcal{R}(B) = \operatorname{ker}(B^\top)^\perp$ and $\operatorname{ker}(B^\top) = \operatorname{ker}(BWB^\top) = \operatorname{ker}(L)$ we thus have that \break $\mathcal{W}_1^\sigma(\alpha,\beta) < \infty$ if and only if $(\alpha-\beta)\perp \operatorname{ker}(L)$.
	\end{proof}

	~\cref{prop:general-feasibility-statement} serves as the most general characterization of the feasibility of the optimal parallel transportation problem for a particular choice of $(G,\sigma)$ and $\alpha,\beta$. It is worth noting that in the case of a standard undirected and connected graph, $(\alpha-\beta)\perp \operatorname{ker}(L)$ is equivalent to $(\alpha-\beta)\perp \mathbf{1}_n$ where $\mathbf{1}_n$ is the vector containing all ones, i.e., $\sum_i \alpha(i) = \sum_i \beta(i)$ so that $\alpha,\beta$ have equal total mass. The following corollary, which we present without proof, highlights that in the nonsingular case feasibility is a nonissue.
	
	\begin{corollary}
		If $L$ is nonsingular, then $\mathcal{W}_1^\sigma(\alpha,\beta) < \infty$ for any choice of $\alpha,\beta\in\ell(V;\R^d)$.
	\end{corollary}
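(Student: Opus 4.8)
The plan is to derive this immediately from \Cref{prop:general-feasibility-statement}. That proposition asserts that the optimal parallel transportation problem \cref{eq:vector-valued-em} is feasible with $\mathcal{W}_1^\sigma(\alpha,\beta) < \infty$ exactly when $(\alpha-\beta)\perp\operatorname{ker}(L)$. If $L$ is nonsingular then $\operatorname{ker}(L) = \{0\}$, so the orthogonality condition $(\alpha-\beta)\perp\operatorname{ker}(L)$ holds vacuously for every pair $\alpha,\beta\in\ell(V;\R^d)$; the conclusion follows at once.

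Alternatively, I would give the one-line self-contained argument that parallels the proof of the proposition: since $L = BWB^T$ and $W$ is positive definite, $\operatorname{ker}(B^T) = \operatorname{ker}(L) = \{0\}$, so $B^T$ is injective; by the rank--nullity theorem (or by $\mathcal{R}(B) = \operatorname{ker}(B^T)^\perp$) this forces $\mathcal{R}(B) = \ell(V;\R^d)$, i.e. $B$ is surjective as a map $\ell(E;\R^d)\rightarrow\ell(V;\R^d)$. Hence for any $\alpha,\beta$ there exists $J\in\ell(E;\R^d)$ with $BJ = \alpha-\beta$, which is precisely a feasible flow, and therefore $\mathcal{W}_1^\sigma(\alpha,\beta) < \infty$.

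There is no substantive obstacle here: the corollary is purely a specialization of the preceding proposition to the case of trivial kernel, and the only thing worth remarking is the interpretation — nonsingularity of $L$ is equivalent (by \Cref{balance-conditions}) to $(G,\sigma)$ being inconsistent, so feasibility failure is genuinely a phenomenon tied to connection consistency. I would keep the written proof to a single sentence citing \Cref{prop:general-feasibility-statement}, which is presumably why the authors state it without proof.
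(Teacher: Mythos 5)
Your proof is correct and is exactly the route the paper intends: the corollary is stated without proof precisely because, as you say, nonsingularity of $L$ makes the orthogonality condition of \Cref{prop:general-feasibility-statement} vacuous (and your self-contained surjectivity argument via $\mathcal{R}(B) = \operatorname{ker}(B^T)^\perp$ is just the proposition's proof specialized). One caution about your closing aside, which is not part of the proof but is stated too strongly: by \Cref{balance-conditions} and \Cref{lemma:isomorphism-of-kernel-of-incidence}, nonsingularity of $L$ implies inconsistency, but the converse fails for $d\geq 2$, since an inconsistent connection may still have a nonzero direction fixed by every cycle product (so $\dim\ker(L)$ can be strictly between $0$ and $d$); the equivalence you assert holds only when $d=1$.
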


	\cref{prop:general-feasibility-statement} suggests a natural way to construct $\alpha$ and $\beta$ so that the problem is feasible, i.e., $\mathcal{W}_1^\sigma(\alpha,\beta) < \infty$. To wit, letting $\mathcal{C}\subseteq\ell(V;\R^d)$, we say $\mathcal{C}$ is a \textit{feasible collection} if $\mathcal{W}_1^\sigma(\alpha,\beta) < \infty$ for each $\alpha,\beta\in\mathcal{C}$. To construct feasible collections for $(G,\sigma)$ fixed, one selects an orthonormal basis $\{u_1,\dotsc, u_k\}\subseteq\ell(V;\R^d)$ of $\ker(L)$ and a choice of scalars $(c_1,\dotsc, c_k)$, then the intersection of (possibly affine) hyperplanes $\mathcal{C} = \bigcap_{\ell=1}^k \left\{ f\in\ell(V;\R^d) : f^\top  u_\ell = c_\ell\right\}$ will be a set of feasible vector fields. Note, however, that this is nonunique- as exemplified by the observation that just as one may define Earth Mover's distance between unit-mass measures (by choosing $u_1 = \frac{1}{\sqrt{n}}\mathbf{1}_n$ and $c_1=\frac{1}{\sqrt{n}}$), one may also define a similar distance between functions which sum to, e.g., $-3$. 
	
	\subsection{Feasibility for a fixed class of vector fields}\label{subsec:feasible-fixedcollection}

	In light of the preceding remarks, it is natural to investigate whether a complementary approach to feasibility may be possible. Namely, instead of choosing a class of feasible vector fields for a particular connection graph, can one identify a connection-independent class of vector fields, and then modify the underlying connection using a switching transformation such that the chosen class is feasible? We propose to highlight so-called vector-valued probability densities, and in such a case, the answer is yes. The basic idea is that, given a connection $\sigma$, we can apply a switching transformation (see~\cref{lemma:constant-kernel-vectors}) to obtain a new, switching equivalent connection $\tau$ for which there exists a direction in $\mathbb{R}^d$ which is invariant under parallel transport across each edge; i.e., such that there exists a vector-valued constant function in the kernel of the incidence matrix of $(G,\tau)$. In settings such as these wherein parallel transport is trivial for one or more directions in the tangent spaces of each node in $V$ we argue that $d$-tuples of probability measures are a natural class of vector fields on which to specialize. Namely:
	
	\begin{definition}\label{def:densities}
		A {\normalfont vector-valued probability density} on a graph $G$ is a function $\alpha\in\ell(V;\R^d)$ which satisfies
			\begin{enumerate}[label=(\roman*)]
				\item $\alpha(i)_\ell\geq 0$ for each $i\in V$ and $1\leq \ell\leq d$.
				\item $\sum_{i\in V}\alpha(i) = \mathbf{1}_d $, where $\mathbf{1}_d^\top  = [1,\ldots, 1]$.
			\end{enumerate}
		We use $\mathcal{P}_d(V)$ to refer to the set of all vector-valued probability densities on $G$ with target $\R^d$.
	\end{definition}
	
	With this definition in hand, we can state the following result along the lines of the preceding remark. We prove it by way of~\cref{lemma:isomorphism-of-kernel-of-incidence} and~\cref{lemma:constant-kernel-vectors}, which are somewhat technical. A shorter proof using spanning trees is possible; however, we note that~\cref{lemma:isomorphism-of-kernel-of-incidence} in particular may be of independent interest in that it characterizes the functions $f\in\ell(V;\mathbb{R}^d)$ which have zero gradient for a general signature (e.g., we make use of this fact later on in~\cref{ex:cycle-relaxation}). We state the theorems here and include proofs in~\cref{sec:proofs-appdx} for completeness.
	
	\note{This theorem has been moved up, and its proof labeled.}
	\begin{theorem}\label{th:feasibility-switched}
		Let $(G= (V,E,w),\sigma)$ be a connected connection graph. Then there exists a switching function $\tau:V\rightarrow \mathbb{O}(d)$ for which $\mathcal{P}_d(V)$ is a feasible collection for the $\tau$-switched connection graph $(G,\sigma^\tau)$ .
	\end{theorem}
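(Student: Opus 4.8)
The plan is to combine \Cref{prop:general-feasibility-statement} with an explicit switching function that trivializes $\sigma$ along a spanning tree. Write $L^\tau$ and $B^\tau$ for the connection Laplacian and connection incidence matrix of $(G,\sigma^\tau)$. By \Cref{prop:general-feasibility-statement}, $\mathcal{P}_d(V)$ is a feasible collection for $(G,\sigma^\tau)$ exactly when $(\alpha-\beta)\perp\ker(L^\tau)$ for every $\alpha,\beta\in\mathcal{P}_d(V)$; so the statement reduces to producing one $\tau$ for which $\ker(L^\tau)$ is orthogonal to all such differences.

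The first step pins down how small $\ker(L^\tau)$ must be. For $1\le\ell\le d$ let $c_\ell\in\ell(V;\R^d)$ denote the constant field $c_\ell(i)=e_\ell$, where $e_\ell$ is the $\ell$-th standard basis vector of $\R^d$. Condition (ii) of \Cref{def:densities} gives $\sum_{i\in V}(\alpha(i)-\beta(i))=\mathbf{1}_d-\mathbf{1}_d=0_d$ for all $\alpha,\beta\in\mathcal{P}_d(V)$, so $\langle\alpha-\beta,c_\ell\rangle=0$ for each $\ell$. Hence it suffices to construct $\tau:V\to\mathbb{O}(d)$ with $\ker(L^\tau)\subseteq\operatorname{span}\{c_1,\dotsc,c_d\}$, i.e. so that every field in the kernel of $L^\tau$ is constant.

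The second step builds such a $\tau$. As $G$ is connected it has a spanning tree; root it at vertex $1$, let $P_i$ be the unique tree-path from $1$ to $i$, and set $\tau(1)=I_d$ and $\tau(i)=\sigma_{P_i}^{-1}$ for each $i\in V$ (well-defined and $\mathbb{O}(d)$-valued since products of orthogonal matrices are orthogonal). For a tree edge $(p,c)$ with $c$ a child of $p$ one has $P_c=P_p+(p\to c)$, hence $\sigma_{P_c}=\sigma_{P_p}\sigma_{pc}$ and $\tau(c)=\sigma_{pc}^{-1}\tau(p)$; plugging into \Cref{def:tau_switched} gives $\sigma^\tau_{pc}=\tau(p)^{-1}\sigma_{pc}\tau(c)=\tau(p)^{-1}\sigma_{pc}\sigma_{pc}^{-1}\tau(p)=I_d$, and by orthogonality $\sigma^\tau_{cp}=I_d$ too, so $\sigma^\tau$ is trivial on every tree edge regardless of which endpoint carries the smaller index. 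Finally, for any $f\in\ker(L^\tau)$ we have $f\in\ker((B^\tau)^T)$ (as in the proof of \Cref{prop:general-feasibility-statement}), and by \cref{eq:incidence-matrix} this means $f(i)=\sigma^\tau_{ij}f(j)$ for every $(i,j)\in E'$. Restricting to tree edges forces $f(i)=f(j)$, so $f$ is constant on $V$ because the tree spans $V$; thus $\ker(L^\tau)\subseteq\operatorname{span}\{c_1,\dotsc,c_d\}$, and the first step then yields $\mathcal{W}_1^{\sigma^\tau}(\alpha,\beta)<\infty$ for all $\alpha,\beta\in\mathcal{P}_d(V)$.

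I do not expect a genuine obstacle here: the only thing to watch is bookkeeping with the orientation convention of $E'$ and the transposes and signs in \cref{eq:incidence-matrix}, both when checking triviality of $\sigma^\tau$ on tree edges and when using $f\in\ker((B^\tau)^T)\iff f(i)=\sigma^\tau_{ij}f(j)$. The two lemmas the authors allude to are presumably exactly these two ingredients — the spanning-tree trivialization of the connection, and the reduction of feasibility of $\mathcal{P}_d(V)$ to the kernel of $L^\tau$ consisting of constant fields — and neither requires more than \Cref{prop:general-feasibility-statement}, connectedness of $G$, and elementary linear algebra.
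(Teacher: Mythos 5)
Your proposal is correct, and its skeleton matches the paper's: switch the connection so that every element of the kernel of the (switched) incidence matrix is a constant vector field, then observe that differences $\alpha-\beta$ of elements of $\mathcal{P}_d(V)$ sum to $0_d$ and hence are orthogonal to all constant fields, which by \Cref{prop:general-feasibility-statement} (equivalently, $\operatorname{Range}(B)=\ker(B^T)^\perp$) gives feasibility. Even your switching function is the one the paper uses: the paper takes $\tau(i)=\sigma_{P_{i,1}}$ for a fixed path from $i$ to $1$ (remarking that a spanning tree supplies such a choice), which coincides with your $\tau(i)=\sigma_{P_i}^{-1}$. Where you genuinely diverge is in how the key step is justified. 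The paper proves it through \Cref{lemma:isomorphism-of-kernel-of-incidence} and \Cref{lemma:constant-kernel-vectors}: it introduces the subspace $\mathcal{W}^\sigma$ of vectors invariant under all cycle products, builds the isomorphism $H:\ker(B^T)\to\mathcal{W}^\sigma$, and then shows the switched kernel equals $S(\mathcal{W}^\omega)$, i.e., it characterizes the kernel exactly. You instead note that $\sigma^\tau$ is the identity on every spanning-tree edge, so any $f\in\ker((B^\tau)^T)$ satisfies $f(i)=f(j)$ across tree edges and is therefore constant; this one-directional inclusion $\ker(L^\tau)\subseteq\operatorname{span}\{c_1,\dotsc,c_d\}$ is all the theorem needs. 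Your route is shorter and more elementary, avoiding the cycle-product machinery entirely; what it buys less of is the structural information the lemmas provide (the precise identification of the kernel with invariant directions, which the paper reuses conceptually elsewhere, e.g., in motivating feasible collections and the remark on when $\mathcal{W}^{\sigma^\tau}\subseteq\mathcal{W}^\sigma$). The bookkeeping you flag (orientation in $E'$, the transpose in \cref{eq:incidence-matrix}, and $\ker(L)=\ker(B^T)$ via positivity of the edge weights) all checks out.
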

	
	\begin{lemma}\label{lemma:isomorphism-of-kernel-of-incidence}
		Let $(G= (V,E,w),\sigma)$ be a connected connection graph. 
		Define the linear subspace $\mathcal{U}^\sigma\subseteq\mathbb{R}^d$ by
			\begin{align}\label{eq:V-sigma}
				\mathcal{U}^\sigma = \{x\in\mathbb{R}^d:\sigma_Cx = x\text{ for each }\text{cycle } C\text{in } G\}.
			\end{align}
		Let $B$ be the connection incidence matrix associated with $(G,\sigma)$. Then there is an isomorphism between linear spaces $\operatorname{ker}(B^\top )$ and $\mathcal{U}^\sigma$, namely
		\begin{align}H:\operatorname{ker}(B^\top ) &\rightarrow \mathcal{U}^\sigma\\
			f&\mapsto f(1).\end{align}
	\end{lemma}
	
	\begin{lemma}\label{lemma:constant-kernel-vectors}
		Let $(G,\sigma)$ and $\mathcal{U}^\sigma$ be as in~\cref{lemma:isomorphism-of-kernel-of-incidence}. Let $B^\sigma$ be the connection incidence matrix associated to $(G,\sigma)$. Finally, for any linear subspace $A\subseteq \R^d$, let
			\begin{align}
				S(A) = \{f\in\ell(V;\mathbb{R}^d): f(i) = a\text{ for each }i\in V\text{ and some }a\in A\}
			\end{align}
		be the linear subspace of functions whose node values are constant and equal to some element of $A$. Then there exists a switching function $\tau:V\rightarrow\mathbb{O}(d)$ such that with $\omega \coloneqq \sigma^{\tau}$, we have the following chain of inclusions:
		\begin{align}S(\mathcal{U}^\sigma)\subseteq  S\left(\mathcal{U}^{\omega}\right) = \operatorname{ker}(B^{\omega^\top }).\end{align}
		In particular, $\operatorname{ker}(B^{\omega^\top })$ consists only of constant functions.
	\end{lemma}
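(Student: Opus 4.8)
The plan is to produce $\tau$ by a ``spanning-tree gauge'' and then read both conclusions off from \Cref{lemma:isomorphism-of-kernel-of-incidence} applied to the switched connection. Fix a spanning tree $T$ of the connected graph $G$ rooted at vertex $1$, and for each $i\in V$ let $P_i$ be the unique path in $T$ from $i$ to $1$ (so $P_1$ is the empty path with $\sigma_{P_1}=I_d$). Define $\tau(i)=\sigma_{P_i}\in\mathbb{O}(d)$ and set $\omega\coloneqq\sigma^\tau$, so that $\omega_{ij}=\tau(i)^{-1}\sigma_{ij}\tau(j)=\sigma_{P_i}^{-1}\sigma_{ij}\sigma_{P_j}$. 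The first observation is that $\omega$ is \emph{trivial along every tree edge}: if $\{i,j\}\in T$ with $j$ the parent of $i$, then $P_i=(i,j)+P_j$, hence $\sigma_{P_i}=\sigma_{ij}\sigma_{P_j}$ and $\omega_{ij}=(\sigma_{ij}\sigma_{P_j})^{-1}\sigma_{ij}\sigma_{P_j}=I_d$.

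Next I would identify $\operatorname{ker}(B^{\omega^T})$. By \Cref{lemma:isomorphism-of-kernel-of-incidence} applied to $(G,\omega)$, any $f\in\operatorname{ker}(B^{\omega^T})$ satisfies $f(i)=\omega_{ij}f(j)$ for adjacent $i,j$, and iterating this along the tree path gives $f(i)=\omega_{P_i}f(1)$; since $P_i$ is a concatenation of tree edges and $\omega$ is trivial on those, $\omega_{P_i}=I_d$, so $f(i)=f(1)$ for every $i\in V$. Thus $f$ is the constant function with value $f(1)$, and the same lemma gives $f(1)\in\mathcal{W}^\omega$; conversely, its surjectivity half shows that for every $x\in\mathcal{W}^\omega$ the constant function $f\equiv x$ lies in $\operatorname{ker}(B^{\omega^T})$ (the function constructed there is $f(i)=\omega_{P_i}x=x$ once the tree paths are used). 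Hence $\operatorname{ker}(B^{\omega^T})=S(\mathcal{W}^\omega)$, and in particular it consists only of constant functions.

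It remains to prove $S(\mathcal{W}^\sigma)\subseteq S(\mathcal{W}^\omega)$, i.e. $\mathcal{W}^\sigma\subseteq\mathcal{W}^\omega$. The key identity is that for any cycle $C=(i_1,\dotsc,i_k=i_1)$ the $\tau$-factors telescope, so $\omega_C=\tau(i_1)^{-1}\sigma_C\tau(i_1)=\sigma_{P_{i_1}}^{-1}\sigma_C\sigma_{P_{i_1}}$. Now fix $x\in\mathcal{W}^\sigma$ and an arbitrary cycle $C$ based at $i_1$. The concatenation $C'=P_{i_1}^{-1}+C+P_{i_1}$ is a cycle based at $1$, so by the defining property of $\mathcal{W}^\sigma$ (which quantifies over all cycles) we get $x=\sigma_{C'}x=\sigma_{P_{i_1}}^{-1}\sigma_C\sigma_{P_{i_1}}x=\omega_C x$. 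Since $C$ was arbitrary, $x\in\mathcal{W}^\omega$, completing the chain $S(\mathcal{W}^\sigma)\subseteq S(\mathcal{W}^\omega)=\operatorname{ker}(B^{\omega^T})$.

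The only real content here is bookkeeping: orienting the tree paths consistently so that the parent-edge identity $\sigma_{P_i}=\sigma_{ij}\sigma_{P_j}$ holds verbatim, and reducing cycles based at an arbitrary vertex to cycles based at $1$ via conjugation by $\sigma_{P_{i_1}}$ so that the definition of $\mathcal{W}^\sigma$ applies. I expect no genuinely hard step once the spanning-tree gauge is in place; the one place to stay careful is that $\operatorname{ker}(B^{\omega^T})$ must be shown to \emph{equal} (not merely be contained in) $S(\mathcal{W}^\omega)$, which is exactly where the surjectivity half of \Cref{lemma:isomorphism-of-kernel-of-incidence} is used.
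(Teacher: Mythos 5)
Your proof is correct and follows essentially the same route as the paper: the paper also takes $\tau(i)=\sigma_{P_{i,1}}$ for fixed paths to the base vertex (noting parenthetically that any spanning tree yields such a choice), proves $\mathcal{W}^\sigma\subseteq\mathcal{W}^{\omega}$ via the same telescoping identity $\omega_P=\sigma_{P_{i_1,1}}^{-1}\sigma_P\sigma_{P_{i_n,1}}$ realizing each $\omega$-cycle product as a $\sigma$-cycle product, and identifies $\operatorname{ker}(B^{\omega^T})=S(\mathcal{W}^{\omega})$ through \Cref{lemma:isomorphism-of-kernel-of-incidence}. Your observation that $\omega$ is trivial on tree edges is a slightly more transparent way of seeing that kernel elements are constant, but it is the same mechanism as the paper's computation $\omega_{P_{i,1}}x=\tau(i)^{-1}\tau(i)x=x$.
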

	
	\begin{remark}
		It appears that in general the reverse inclusion $ \mathcal{U}^{\sigma^\tau} \subseteq \mathcal{U}^\sigma$ need not hold unless the paths $P_{i,1}$ for $i\in V$ are chosen in such a way that the connection products $\sigma_{P_{i,1}}$ commute with the cycle products $\sigma_C$ for each cycle $C$ in $G$.  This commutation property ensures that $\sigma^{\tau}_{C} = \sigma_{C}$ for all cycles $C$ in $G$ (see~\cref{eq:path-of-switched-sig2}), thereby making the reverse inclusion valid.
	\end{remark}
	
	Having explored the question of feasibility, for the remainder of the theoretical portion of this paper, we assume that the connection graph $(G,\sigma)$ is fixed and $\alpha,\beta$ are generic vector fields- whether they are obtained from some feasible collection, from vector-valued measures, or some alternative means.
	
    \note{The result strong duality for the unregularized problem has been relocated to the beginning of the following section.}
 
	\section{Duality for the Beckmann problem on connection graphs}\label{sec:quadratic-regularization}

    This section is dedicated to Lagrangian duality for the Beckmann problem on connection graphs, as well as a quadratically regularized variant of the problem,  which could also be considered a Kantorovich-Rubinstein-type duality. As a matter of background, the Kantorovich-Rubinstein duality~\cite{kantorovich1958onaspace} on graphs is given by the equation
        \begin{align}
            \inf_{J\in \ell(E';\R)} & \left\{ \sum_{e \in E'} w(e)|J(e)| : BJ = \alpha-\beta \right\}\\
            &= \sup\left\{\phi^\top (\alpha - \beta) : \phi\in\ell(V), \|W^{-1}B^\top  \phi\|_\infty \leq 1\right\},
        \end{align}
    where we recall that $W$ is the diagonal matrix of edge weights (a proof of this result for graphs can be found in, e.g.,~\cite{PC-19}). In~\cref{subsec:lagrangian-dual-normal} we establish a natural version of this result for the Beckmann problem on connection graphs (\cref{th:strong-duality}). Essid and Solomon~\cite{ES-18} proved a variant of this result for quadratically regularized $1$-Wasserstein distance, which takes the form, for $\lambda > 0$,
        \begin{align}\label{eq:essidsolomon-duality}
            \inf & \left\{\sum_{e\in E'}|J(e)|w(e) + \frac{\lambda}{2}\|J\|_2^2 : J\in\ell(E'), BJ = \alpha - \beta \right\} \\
            &= \sup\left\{ \phi^\top (\alpha-\beta) - \frac{1}{2\lambda }\|( |B^\top \phi| - W\mathbf{1}_{E'})_+\|_2^2 : \phi\in\ell(V)\right\},
        \end{align}
    where $(x)_+ = \max\{x, 0\}$ for vectors $x$ is the positive part of $x$, $\mathbf{1}_{E'}$ is the vector of all ones, and $|x|$ denotes the entrywise absolute value of a vector $x$. Regularization has the advantage of converting the problem into a strictly convex program, thus ensuring uniqueness of solutions. Moreover, in this case, the regularized program admits duality correspondence, i.e., a closed form representation of the optimal primal variable $J$ in terms of the optimal dual variable $\phi$. In~\cref{subsec:duality-rr} we establish an extension of this result to the Beckmann problem on connection graphs as well (\cref{thm:rr-strong-duality}), which includes the presence of a relaxation term and therefore also generalizes the duality result in \cref{eq:essidsolomon-duality} and its duality correspondence properties.
	
    \subsection{Duality for the unregularized Beckmann problem}\label{subsec:lagrangian-dual-normal}
    
	\begin{theorem}\label{th:strong-duality}
		Let $(G= (V,E,w),\sigma)$ be a connected connection graph.
		Let $\alpha,\beta\in\ell(V;\R^d)$. Then strong duality holds for the problem~\cref{eq:vector-valued-em}, in the sense that
		\begin{align}\label{eq:strong-duality}
			\mathcal{W}_1^\sigma(\alpha,\beta)&= \inf_{J\in\ell(E'; \R^d)}\left\{\sum_{e\in E'}w(e)\|J(e)\|_2 : BJ = \alpha - \beta\right\}\\
			&=\sup_{\phi\in\ell(V;\R^d)}\left\{\langle \phi,\alpha - \beta\rangle : \|(B^\top \phi)(e)\|_2\leq w(e)\hspace{.1cm}\text{\normalfont for all }e\in E'\right\}.
		\end{align}
	\end{theorem}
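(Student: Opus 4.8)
The plan is to prove \Cref{th:strong-duality} by a standard Lagrangian/minimax argument and an appeal to convex duality, being careful about the feasibility subtlety flagged by \Cref{prop:general-feasibility-statement}. First I would dispose of the infeasible case: if $(\alpha-\beta)\not\perp\operatorname{ker}(L)$, then by \Cref{prop:general-feasibility-statement} the primal infimum is $+\infty$; simultaneously, picking $\phi_0\in\operatorname{ker}(L)=\operatorname{ker}(B^T)$ with $\langle\phi_0,\alpha-\beta\rangle\neq 0$, the vectors $t\phi_0$ are all feasible for the dual (since $B^T(t\phi_0)=0$ satisfies $\|(B^T\phi)(e)\|_2\le w(e)$ trivially), and $\langle t\phi_0,\alpha-\beta\rangle\to+\infty$ as $t\to\pm\infty$, so the dual supremum is also $+\infty$. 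Hence both sides agree. So assume henceforth $(\alpha-\beta)\perp\operatorname{ker}(L)$, i.e., the primal is feasible with finite value.

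Next I would write the primal as $\inf_J\{\,F(J): BJ=\alpha-\beta\,\}$ where $F(J)=\sum_{e\in E'}w(e)\|J(e)\|_2$, introduce a multiplier $\phi\in\ell(V;\R^d)$ for the equality constraint, and form the Lagrangian
\begin{align}
\mathcal{L}(J,\phi)=\sum_{e\in E'}w(e)\|J(e)\|_2-\langle\phi,BJ-(\alpha-\beta)\rangle.
\end{align}
Using $\langle\phi,BJ\rangle=\langle B^T\phi,J\rangle=\sum_{e\in E'}\langle(B^T\phi)(e),J(e)\rangle$ and minimizing over $J$ edge-by-edge, the $e$-th term becomes $\inf_{v\in\R^d}\big(w(e)\|v\|_2-\langle(B^T\phi)(e),v\rangle\big)$, which equals $0$ when $\|(B^T\phi)(e)\|_2\le w(e)$ and $-\infty$ otherwise — this is exactly the fact that the conjugate of the norm $w(e)\|\cdot\|_2$ is the indicator of the dual-norm ball of radius $w(e)$, and the $\ell_2$ norm is self-dual. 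Thus $\inf_J\mathcal{L}(J,\phi)=\langle\phi,\alpha-\beta\rangle$ if $\phi$ is dual-feasible and $-\infty$ otherwise, giving the dual program in the statement. Weak duality (dual $\le$ primal) is then immediate.

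For strong duality (no gap, and the dual sup is attained or at least the values match) I would invoke a standard finite-dimensional convex-duality theorem — e.g., that for a convex program with a linear equality constraint and a polyhedral-or-continuous convex objective that is feasible, Slater-type regularity is automatic in finite dimensions (the objective $F$ is finite everywhere, so there is no relative-interior obstruction), hence there is zero duality gap; see the treatment in \cite{S-15} (Problem 4.5) which handles precisely this Beckmann duality, or cite Fenchel–Rockafellar. Concretely, one can apply Fenchel–Rockafellar to $F(J)+\iota_{\{\alpha-\beta\}}(BJ)$: since $F$ is continuous everywhere on $\ell(E';\R^d)$, the required qualification ($B$ maps a point where $F$ is continuous into $\operatorname{dom}$ of the indicator, which holds at any feasible $J$) is satisfied, yielding $\inf F(J)$ subject to $BJ=\alpha-\beta$ equals $\sup_\phi\{\langle\phi,\alpha-\beta\rangle-F^*(B^T\phi)\}$ with $F^*(B^T\phi)=\sum_e\iota_{\{\|\cdot\|_2\le w(e)\}}((B^T\phi)(e))$, which is the claimed identity.

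The main obstacle — and the only place care is genuinely needed — is the interaction between the quotient structure on the dual side (the dual objective is invariant under $\phi\mapsto\phi+\psi$ for $\psi\in\operatorname{ker}(B^T)$, since $B^T\psi=0$ and $\langle\psi,\alpha-\beta\rangle=0$ in the feasible case) and the assertion that the dual supremum is finite and the primal infimum is attained. I would handle attainment of the primal by a coercivity/compactness argument: the feasible set $\{J:BJ=\alpha-\beta\}$ is a nonempty affine subspace and $F$ is a norm-like coercive function, so sublevel sets intersected with the affine constraint set are compact, giving a minimizer; and I would note the dual value is finite because it is sandwiched between $-\infty$ and the finite primal value by weak duality, while Fenchel–Rockafellar guarantees equality. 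No blank lines, and this completes the argument; the detailed verification of $(w\|\cdot\|_2)^*=\iota_{\{\|\cdot\|_2\le w\}}$ and of the qualification conditions is routine and I would relegate it to a sentence or a citation rather than grinding it out.
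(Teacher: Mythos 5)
Your proposal is correct and follows essentially the same route as the paper: form the Lagrangian, minimize edge-by-edge (your conjugate-of-the-norm computation is the paper's Cauchy–Schwarz argument), invoke a standard constraint qualification (the paper cites Slater, you cite Fenchel–Rockafellar, both reducing to primal feasibility here), and handle the infeasible case by exhibiting a kernel direction $\psi\in\operatorname{ker}(B^T)$ along which the dual is unbounded, exactly as the paper does. The only differences are cosmetic (ordering of cases, and your optional coercivity remark on primal attainment, which the theorem does not require).
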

	
	\begin{proof}
		The key observation here is that $J\mapsto \|J\|_{2,1}:=\sum_{e \in E'}w(e)\|J(e)\|_2 $ defines a norm on $\ell(E')$, with which problem~\cref{eq:vector-valued-em} admits the expression
			\begin{align}\label{eq:connection-primal-norm-formulation}
				\mathcal{W}_1^\sigma(\alpha,\beta)=\inf_{J\in \ell(E';\R^d)} \left\{\|J\|_{2,1} : B J = c \right\}
			\end{align}
		where $c=\alpha-\beta$. Then the Lagrangian is given by
			\begin{align}
				\mathcal{L}(J, \phi) = \|J\|_{2,1} + \langle \phi, BJ - c\rangle = \|J\|_{2,1} + \langle B^\top \phi, J\rangle - \langle \phi, c\rangle.
			\end{align}
		Observe that if there exist $e \in E'$ such that $\left\|(B^\top \phi)(e)\right\|_2 > w(e)$ then by taking $J(e) = -k \cdot B^\top \phi(e)/\left\|(B^\top \phi)(e)\right\|_2$, we obtain $\mathcal{L}(J,\phi) \rightarrow -\infty$ as $k \rightarrow \infty$. On the other hand, using the Cauchy-Schwarz inequality, we find that 
			\begin{align}
				\mathcal{L}(J, \phi) \geq \sum_{e \in E'} (w(e)- \left\|(B^\top \phi)(e)\right\|_2)\left\|J(e)\right\|_2 - \langle \phi, c\rangle.
			\end{align}
		If $\left\|(B^\top \phi)(e)\right\|_2 \leq w(e)$ for all $e \in E'$ then the minimum possible value of $\mathcal{L}(J, \phi)$ is $-\langle \phi, c\rangle$, which is attainable. Specifically, the minimizer $J$ must have $J(e) = 0$ for edges $e \in E'$ where the strict inequality $\left\|(B^\top \phi)(e)\right\|_2 < w(e)$ holds. In cases of equality, $\left\|(B^\top \phi)(e)\right\|_2 = w(e)$, the minimizing $J$ has two choices for $J(e)$: either zero or $-(B^\top \phi)(e)$. With these considerations, the dual admits the formulation
			\begin{align}\label{eq:connection-dual-norm-formulation}
				\sup_{\phi\in \ell(V;\R^d)} \left\{\langle \phi, c\rangle : \| B^\top \phi\|_{2,1^\ast} \leq 1\right\}.
			\end{align}
		where $\|\cdot\|_{2,1^\ast}$ is the dual norm to $\|\cdot\|_{2,1}$. A short argument vis-\'{a}-vis the suggestive notation confirms that for each $K\in\ell(E';\R^d)$,
			\begin{align}\label{eq:2_1_norm}
				\| K\|_{2,1^\ast} =: \|K\|_{2,\infty} = \max_{e\in E'} \frac{\|K(e)\|_2}{w(e)}.
			\end{align}
		Now, if the primal is feasible then the strong duality holds via Slater's condition~\cite[Eq.~(5.27)]{BV-04}. If the primal is not feasible there exist $\psi \in \ker(B^\top )$ such that $\psi \neq 0$ and $c = BJ + \psi$ for some $J \in \ell(E'; \mathbb{R}^d)$. Set $\phi = k \psi$ where $k \in \mathbb{R}$. We observe that $B^\top \phi = 0$ and thus $\phi$ is dual feasible for all $k \in \mathbb{R}$. Moreover,
			\begin{align}
				\langle \phi, c \rangle = k\langle \psi, BJ + \psi\rangle =  \langle B^\top \psi, J\rangle + k \left\|\psi\right\|_2^2 = k\left\|\psi\right\|_2^2 \xrightarrow{k \rightarrow \infty} \infty.
			\end{align}
		Thus, the dual is feasible but unbounded.
	\end{proof}
	
	One key advantage of the dual formulation is that its feasibility region is defined by inequality rather than equality constraints and is thus more amenable to implementation. However, if one is interested in obtaining an optimal primal variable $J$ in addition to the optimal transportation cost, solving the dual formulation alone appears insufficient in this setting as duality correspondence (i.e. the ability to reconstruct an optimal primal variable from an optimal dual variable) does not appear to hold. This motivates an exploration of a quadratically regularized variant of this problem in~\cref{sec:quadratic-regularization}, for which duality correspondence indeed holds. We conclude this section with some observations that arise from the preceding proof.
	
	\begin{remark}
		\label{rmk:opt-primal-from-opt-dual-unregularized}
		When the primal problem is feasible and the dual problem has been solved, it is not always possible to directly construct an optimal primal solution from the optimal dual solution without additional assumptions on $B$ or $c$. In such cases, we conjecture (based on~\cite{farmer1994extreme}) that the optimal dual solution $\phi$ would occur as an extreme point of the unit ball in the Lipschitz-type space $(\ell(V;\R^d),\|\cdot\|_{2,\infty})$, which would imply that it satisfies $\left\|(B^\top \phi)(e)\right\|_2 = w(e)$. Since there are $2^m$ minimizers of the Lagrangian, a minimizer of the Lagrangian may not be primal feasible, thus may not be optimal primal. But the optimal primal always minimizes the Lagrangian. From the proof presented earlier, we can narrow down the possibilities for the optimal primal to $2^m$ choices. Among these choices, the ones that are primal feasible would correspond to the optimal primal solutions.
	\end{remark}
	
	\begin{remark}
		\normalfont
		The choice to use $\|\cdot\|_2$ in equation~\cref{eq:strong-duality} when referring to $\|J(e)\|_2$ and $\|(B^\top  f)(e)\|_2$ was preferential- it can be replaced by $\|\cdot\|_p$ and $\|\cdot\|_q$ resp. for $p>1$ and $\frac{1}{p}+\frac{1}{q} = 1$, if in the proof of Theorem~\ref{th:strong-duality}, Cauchy-Schwarz inequality is replaced by H\"{o}lder's inequality.
	\end{remark}
	
	\begin{corollary}\label{th:emd-metric}
		In general, $\mathcal{W}^\sigma_1(\cdot,\cdot)$ defines an extended (possibly $\infty$-valued) metric. If $\mathcal{P}_d(V)$ is a feasible collection for $(G,\sigma)$ then $\mathcal{W}^\sigma_1(\cdot,\cdot)$ as defined in~\cref{eq:vector-valued-em} is a metric on $\mathcal{P}_d(V)$. 
	\end{corollary}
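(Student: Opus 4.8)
The plan is to verify the metric axioms in turn, leveraging the norm-based reformulation of the problem established in the proof of \Cref{th:strong-duality}, namely that $\mathcal{W}_1^\sigma(\alpha,\beta) = \inf\{\|J\|_{2,1} : BJ = \alpha-\beta\}$ where $\|\cdot\|_{2,1}$ is a genuine norm on $\ell(E';\R^d)$. First I would address \textbf{well-definedness and finiteness}: by \Cref{prop:general-feasibility-statement}, $\mathcal{W}_1^\sigma(\alpha,\beta) < \infty$ iff $(\alpha-\beta)\perp\ker(L)$; if $\mathcal{P}_d(V)$ is a feasible collection then by definition this holds for all $\alpha,\beta\in\mathcal{P}_d(V)$, so the quantity is a finite nonnegative real number in that case (in general it is valued in $[0,\infty]$, giving the extended metric). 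Nonnegativity is immediate since $\|\cdot\|_{2,1}\geq 0$, and symmetry follows because $J\mapsto -J$ is a bijection on feasible flows: $BJ = \alpha-\beta$ iff $B(-J) = \beta-\alpha$, and $\|-J\|_{2,1} = \|J\|_{2,1}$, so the two infima coincide.

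Next I would handle \textbf{the triangle inequality}. Given $\alpha,\beta,\gamma\in\mathcal{P}_d(V)$ and $\varepsilon > 0$, pick near-optimal flows $J_1$ with $BJ_1 = \alpha-\beta$ and $\|J_1\|_{2,1}\leq \mathcal{W}_1^\sigma(\alpha,\beta)+\varepsilon$, and $J_2$ with $BJ_2 = \beta-\gamma$ and $\|J_2\|_{2,1}\leq \mathcal{W}_1^\sigma(\beta,\gamma)+\varepsilon$. Then $J_1 + J_2$ is feasible for the pair $(\alpha,\gamma)$ since $B(J_1+J_2) = (\alpha-\beta)+(\beta-\gamma) = \alpha-\gamma$, so by the triangle inequality for the norm $\|\cdot\|_{2,1}$ we get $\mathcal{W}_1^\sigma(\alpha,\gamma)\leq \|J_1+J_2\|_{2,1}\leq \|J_1\|_{2,1}+\|J_2\|_{2,1}\leq \mathcal{W}_1^\sigma(\alpha,\beta)+\mathcal{W}_1^\sigma(\beta,\gamma)+2\varepsilon$; letting $\varepsilon\to 0$ finishes it.

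The \textbf{main obstacle} is the identity of indiscernibles: $\mathcal{W}_1^\sigma(\alpha,\beta) = 0$ iff $\alpha = \beta$. The direction $\alpha=\beta\implies \mathcal{W}_1^\sigma = 0$ is trivial (take $J\equiv 0$). For the converse, suppose $\mathcal{W}_1^\sigma(\alpha,\beta) = 0$. I would argue that the infimum is actually attained: the feasible set $\{J : BJ = \alpha-\beta\}$ is a (nonempty, by feasibility) closed affine subspace of the finite-dimensional space $\ell(E';\R^d)$, and $\|\cdot\|_{2,1}$ is a continuous coercive function on it, so a minimizer $J^\ast$ exists. Then $\|J^\ast\|_{2,1} = 0$ forces $J^\ast = 0$ since $\|\cdot\|_{2,1}$ is a norm, hence $\alpha - \beta = BJ^\ast = 0$, i.e. $\alpha = \beta$. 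The one subtlety to be careful about here is confirming the infimum is attained (or, alternatively, arguing directly: if $\alpha\neq\beta$ then $\alpha-\beta\neq 0$ lies in $\mathcal{R}(B)\setminus\{0\}$, and since all norms on the finite-dimensional space $\mathcal{R}(B)$ are equivalent while $\|BJ\|$ is bounded above by a constant times $\|J\|_{2,1}$, one gets $\|J\|_{2,1}$ bounded below by a positive constant over all feasible $J$, so the infimum is strictly positive). I would also remark that the extended-metric claim in the general case requires only that the arguments above never used finiteness in an essential way — symmetry and the triangle inequality hold in $[0,\infty]$ with the usual conventions, and the indiscernibility argument is unchanged whenever $\mathcal{W}_1^\sigma(\alpha,\beta)<\infty$, while if it is $+\infty$ then certainly $\alpha\neq\beta$ is possible only when $(\alpha-\beta)\not\perp\ker(L)$, consistent with the extended-metric structure.
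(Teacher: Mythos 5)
Your proof is correct, and it is worth noting where it diverges from the paper's (very terse) argument. The paper simply observes that definiteness follows from the primal formulation in \cref{eq:strong-duality}, the triangle inequality from the dual formulation, and symmetry from either: in the dual, $\mathcal{W}_1^\sigma(\alpha,\gamma)=\sup_\phi\langle\phi,(\alpha-\beta)+(\beta-\gamma)\rangle$ splits immediately into the sum of two suprema over the same feasible set of potentials, so no $\varepsilon$-argument or flow construction is needed. You instead prove the triangle inequality entirely on the primal side by adding near-optimal flows ($B(J_1+J_2)=\alpha-\gamma$) and invoking the triangle inequality for $\|\cdot\|_{2,1}$, which is equally valid, has the advantage of extending transparently to the $[0,\infty]$-valued setting without appealing to \Cref{th:strong-duality} at all, and keeps the whole proof self-contained at the level of the Beckmann formulation. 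You also supply the detail the paper leaves implicit in "definiteness follows from the primal formulation": that the infimum is attained on the closed affine feasible set (or, alternatively, your norm-equivalence lower bound on $\mathcal{R}(B)$), so that $\mathcal{W}_1^\sigma(\alpha,\beta)=0$ genuinely forces $\alpha=\beta$ rather than merely the existence of feasible flows of arbitrarily small norm. Both routes rest on the same norm reformulation $\mathcal{W}_1^\sigma(\alpha,\beta)=\inf\{\|J\|_{2,1}:BJ=\alpha-\beta\}$; the paper's buys brevity by leaning on strong duality, yours buys independence from the duality theorem and explicit handling of the extended-metric case.
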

	
	\begin{proof}
		The proof is straightforward: definiteness follows from the primal formulation in~\cref{eq:strong-duality}, the triangle inequality follows from the dual formulation in~\cref{eq:strong-duality}, and symmetry follows from either.
	\end{proof}

	\subsection{Duality for the Relaxed-Regularized Beckmann problem}\label{subsec:duality-rr}

    In this section, we consider a relaxed and regularized variant of the Beckmann problem introduced in~\cref{eq:connection-beckmann-formulation-1}. Letting $\alpha,\beta\in\ell(V; \mathbb{R}^d)$ be fixed, we consider the following program, where $\lambda>0$ and $\delta\geq 0$: 
		\begin{align}\label{eq:relax-regularized-primal}
			\mathcal{W}_1^{\sigma,\delta,\lambda}(\alpha,\beta) &= \inf_{J\in\ell(E'; \mathbb{R}^d)}\left\{ \sum_{e\in E'} w(e)\|J(e)\|_2 + \frac{\lambda}{2} \|J\|_{2}^2 : \|BJ - (\alpha-\beta)\|_{\infty} \leq \delta\right\}.
		\end{align}

    This formulation accomplishes two goals. First, by introducing a relaxation of the constraint $BJ=\alpha-\beta$ of the form $\|BJ - (\alpha-\beta)\|_{\infty}\leq \delta$, we relax the feasibility region to allow flows $J$ which may not \textit{exactly} interpolate $\alpha,\beta$ but which get close. This overcomes the feasibility issue described in~\cref{prop:general-feasibility-statement} which presents limitations in practical applications (see~\cref{ex:cycle-relaxation}). Second, we introduce a quadratic regularization term $\frac{\lambda}{2}\|J\|_2^2$ which results in a strictly convex objective function ensuring unique solutions that can be obtained using one's solver of choice (e.g., gradient descent, splitting conic solvers~\cite{odonoghue2016conic}). Before stating the main result, we state a simple observation regarding the feasibility of~\cref{eq:relax-regularized-primal}.

    \begin{lemma}\label{lem:feasibility-rr}
		Given $\alpha,\beta\in\ell(V; \mathbb{R}^d)$, there exists $\delta\geq 0$ such that~\cref{eq:relax-regularized-primal} is feasible, equivalently $\mathcal{W}_1^{\sigma,\delta,\lambda}(\alpha,\beta)<\infty$. Moreover, let $P\in\mathbb{R}^{nd\times nd}$ denote the orthogonal projector onto the kernel of the unweighted connection Laplacian $L$ and suppose $\alpha\neq \beta$ and $\|P(\alpha-\beta)\|_\infty < \|\alpha-\beta\|_\infty$. Then there exists $\delta>0$ such that \break $0 < \mathcal{W}_1^{\sigma,\delta,\lambda}(\alpha,\beta)<\infty$.
	\end{lemma}

	One can think of the condition $\|P(\alpha-\beta)\|_\infty < \|\alpha-\beta\|_\infty$ as a relaxation of~\cref{prop:general-feasibility-statement} which requires $\|P(\alpha-\beta)\|_\infty = 0$. The proof is straightforward and is included for completeness.

    \begin{proof}
		Note that one can take $\delta \geq \|BB^\dagger(\alpha-\beta) - (\alpha-\beta)\|_\infty$ where $B^\dagger$ is the Moore-Penrose inverse of $B$ and the first half of the claim follows. Since 
			\begin{align}
				BB^\dagger = BB^\dagger BB^\dagger = B(B^\dagger B)^\top B^\dagger = BB^\top  (BB^\top )^\dagger = LL^\dagger,
			\end{align}
        the above inequality is equivalent to $\delta \geq \|P(\alpha-\beta)\|_\infty$. Indeed, if $\alpha, \beta$ are feasible for the original problem~\cref{eq:connection-beckmann-formulation-1}, then it suffices to take $\delta \geq 0$. In case when $\alpha \neq \beta$, to ensure the relaxed objective $\mathcal{W}_1^{\sigma,\delta,\lambda}(\alpha,\beta)$ is not trivially zero, the feasibility region must exclude $J=0$. Thus, if 
			\begin{align}
				\|P(\alpha-\beta)\|_\infty \leq \left(\|BJ - (\alpha-\beta)\|_\infty\right)_{J=0} = \|\alpha-\beta\|_\infty
			\end{align}
        then $0 < \mathcal{W}_1^{\sigma,\delta,\lambda}(\alpha,\beta)<\infty$ for choices of $\delta$ which belong to the interval $\|P(\alpha-\beta)\|_\infty \leq \delta <\|\alpha-\beta\|_\infty$.
	\end{proof}

	Our following result concerns an explicit characterization of the Lagrangian dual and duality correspondence in the setting of the relaxed-regularized Beckmann problem.

	\begin{theorem}\label{thm:rr-strong-duality}
		Let $\alpha,\beta\in\ell(V; \mathbb{R}^d)$, $\lambda>0$ and $\delta \geq 0$ be fixed. Then the problem~\cref{eq:relax-regularized-primal} has Lagrangian dual given by
			\begin{align}\label{eq:rr-dual}
				\sup_{\substack{\phi \in\ell(V;\mathbb{R}^{d})}} \langle \phi, c \rangle - \delta \|\phi\|_1 - \frac{1}{2\lambda}\sum_{e\in E'} \left(\left\|(B^\top (\phi))(e)\right\|_2 - w(e)\right)_+^2
			\end{align}
		and strong duality holds. Moreover, if $\phi^\ast$ solves~\cref{eq:rr-dual}, then the optimal primal variable $J^\ast$ for~\cref{eq:relax-regularized-primal} is given by
			\begin{align}\label{eq:duality-corr-rr}
				J^\ast(e) = \left(\frac{\left\|(B^\top \phi^\ast)(e)\right\|_2 - w(e)}{\lambda}\right)_+\frac{(B^\top \phi^\ast)(e) }{\left\| (B^\top \phi^\ast) (e)\right\|_2}.
			\end{align}
	\end{theorem}

	\begin{proof}
		We let $c=\alpha-\beta$ for brevity. The constraint $\|BJ - c\|_{\infty}\leq \delta$ is equivalent to the following constraints
			\begin{align}
				BJ - c - \delta \mathbf{1}_{nd} &\leq 0_{nd} \\
				-(BJ - c) - \delta \mathbf{1}_{nd} &\leq 0_{nd}
			\end{align}
		where $\leq$ is understood elementwise and where $\mathbf{1}_{nd}$ is the vector of all ones. Thus if we introduce the dual variables $\xi_1,\xi_2\in\mathbb{R}^{nd}$ each satisfying $\xi_i\geq 0$, the Lagrangian takes the form
			\begin{align}
				&\mathcal{L}(J, \xi_1, \xi_2)\\
                &= \sum_{e\in E'}\left[ w(e)\|J(e)\|_2 + \frac{\lambda}{2}\|J(e)\|_2^2 \right] + \langle \xi_1, BJ - c - \delta \mathbf{1}_{nd}\rangle  + \langle \xi_2, -(BJ - c) - \delta \mathbf{1}_{nd}\rangle\\
				&= \sum_{e\in E'}\left[ w(e)\|J(e)\|_2 + \frac{\lambda}{2}\|J(e)\|_2^2 \right] + \langle \xi_1 - \xi_2, BJ - c \rangle - \delta \langle \xi_1 + \xi_2, \mathbf{1}_{nd}\rangle
                \end{align}
            For convenience, we reparameterize the Lagrangian by introducing $\phi = \xi_2 - \xi_1$ and $\mu = \xi_1 + \xi_2$. Since $\xi_i \geq 0$ therefore $\mu$ and $\phi$ are constrained so that $\mu \pm \phi \geq 0$. Rewriting the Lagrangian using $\mu$ and $\phi$,
                \begin{align}
			 \mathcal{L}(J, \mu, \phi)	= \sum_{e\in E'}\left[ w(e)\|J(e)\|_2 + \frac{\lambda}{2}\|J(e)\|_2^2 - \langle (B^\top \phi)(e), J(e) \rangle \right] + \langle \phi, c \rangle - \delta \langle \mu, \mathbf{1}_{nd}\rangle.
			\end{align}
        For $e\in E'$, denote the isolated term by 
        \begin{align}
            \widetilde{\mathcal{L}}(J(e), \phi) = w(e)\|J(e)\|_2 + \frac{\lambda}{2}\|J(e)\|_2^2 - \langle (B^\top \phi)(e), J(e) \rangle.
        \end{align}
		We note that if $\|J(e)\|_2\neq 0$, we have
			\begin{align}
				\frac{\partial \mathcal{L}(J, \mu, \phi)}{\partial J(e)} = \frac{\partial \widetilde{\mathcal{L}}(J(e), \phi)}{\partial J(e)} = w(e)\frac{J(e)}{\|J(e)\|_2} + \lambda J(e) -  (B^\top \phi)(e)
			\end{align}
		and thus $\frac{\partial \mathcal{L}(J, \mu, \phi)}{\partial J(e)} = 0$ if and only if
			\begin{align}
				J(e) &= \frac{ \|J(e)\|_2 (B^\top \phi)(e)}{\|J(e)\|_2 \lambda + w(e)}.
			\end{align}
		We thus have in this case that
			\begin{align}\label{eq:rr-je-norm}
				\|J(e)\|_2 &= \frac{\left\| (B^\top \phi)(e)\right\|_2 - w(e)}{\lambda},
			\end{align}
		and therefore
			\begin{align}\label{eq:rr-je-value}
				J(e) = \left(\frac{\left\| (B^\top \phi)(e) \right\|_2 - w(e)}{\lambda}\right)\frac{ (B^\top \phi)(e)}{\left\| (B^\top \phi)(e)\right\|_2}.
			\end{align}
		With the duality correspondence in hand, we seek $\min_{J(e)\in\mathbb{R}^d} \widetilde{\mathcal{L}}(J(e), \mu, \phi)$. By the Cauchy-Schwarz inequality, we have
			\begin{align}
				w(e)\|J(e)\|_2 &+ \frac{\lambda}{2}\|J(e)\|_2^2 - \langle (B^\top \phi)(e), J(e) \rangle\\
                &\qquad \geq \|J(e)\|_2 \left(w(e) - \|(B^\top \phi)(e)\|_2 \right) + \frac{\lambda}{2}\|J(e)\|_2^2
			\end{align}
		The right-hand side is a quadratic in $\left\|J(e)\right\|_2$, so we may identify $J(e)$ that achieves the minimum. If $e \in E'$ is such that $\left\|(B^\top \phi)(e)\right\|_2 \leq w(e)$, then $\widetilde{\mathcal{L}}(J(e), \mu, \phi)$ achieves a minimum of zero whenever $J(e) = 0$. In fact, $J(e) = 0$ is the unique minimizer, for if $J(e) \neq 0$ then from~\cref{eq:rr-je-norm}, we arrive at $\left\|J(e)\right\|_2 \leq 0$, a contradiction. On the other hand, if $e \in E'$ is such that $\left\|(B^\top \phi)(e)\right\|_2 > w(e)$, then $\widetilde{\mathcal{L}}(J(e), \phi)$ achieves a minimum whenever
			\begin{align}
				\left\|J(e)\right\|_2 = \frac{\left\|(B^\top \phi)(e)\right\|_2 - w(e)}{\lambda} > 0. 
			\end{align}
		Since $\left\|J(e)\right\|_2 \neq 0$, we conclude that the minimizing $J(e)$ is uniquely obtained by~\cref{eq:rr-je-value}. Combining the two cases, when $\phi$ is given, the unique minimizer $J_\phi$ of $\mathcal{L}(J, \phi, \mu)$ is determined by~\cref{eq:duality-corr-rr}. We thus have
			\begin{align}
				\inf_{J\in\ell(E';\mathbb{R}^d)} \mathcal{L}(J, \mu, \phi) = -\frac{1}{2\lambda}\sum_{e\in E'} \left(\left\|(B^\top \phi)(e)\right\|_2 - w(e)\right)_+^2 + \langle \phi, c \rangle - \delta \langle \mu, \mathbf{1}_{nd}\rangle
			\end{align}
        Finally, the dual is the supremum of $\inf_{J\in\ell(E';\mathbb{R}^d)} \mathcal{L}(J, \mu, \phi)$ over $\mu, \phi \in \mathbb{R}^d$ with constraints $\mu \pm \phi \geq 0$. Since $\mu \geq |\phi|$ (understood elementwise), therefore the maximum value of $- \delta \langle \mu, \mathbf{1}_{nd}\rangle$ is $-\delta \left\|\phi\right\|_1$. The dual formulation in~\cref{eq:rr-dual} follows.

        As in the proof of Theorem~\ref{th:strong-duality}, if the primal is feasible then the strong duality holds via Slater's condition~\cite[Eq.~(5.27)]{BV-04}. If the primal is infeasible, meaning $\left\|BJ-c\right\|_\infty > \delta$ for all $J\in\ell(E';\mathbb{R}^d)$. Write $c = BJ + \psi$ where $B^\top \psi = 0$ then $\left\|\psi\right\|_\infty > \delta$. Take $\phi = -k\psi$ for $k > 0$. Then the dual objective reduces to 
        $$-k\left\|\psi\right\|_2^2 - \delta k\left\|\psi\right\|_1 < -k\left\|\psi\right\|_2^2 + k\left\|\psi\right\|_\infty\left\|\psi\right\|_1.$$
        Using H\"older's inequality and taking $k \rightarrow \infty$, we conclude that the dual is unbounded.
	\end{proof}
	
	As mentioned in the proof of~\cref{lem:feasibility-rr}, the choice $\delta=0$ is permissible whenever $BJ = \alpha-\beta$ admits a solution $J$. We note however that there are practical considerations for choosing $\delta>0$ even when the system $BJ = \alpha-\beta$ admits a solution. We demonstrate this in the example below.
	
	\begin{example}\normalfont\label{ex:cycle-relaxation}
		Let $G$ be the cycle graph on $n\geq 3$ nodes labelled \break $V=\{0, 1, \dotsc, n-1\}$, and let $R_\theta$ denote the $2\times 2$ rotation matrix with corresponding angle $\theta\in [0,2\pi)$. For $i \in V$, set
			\begin{align}
				w((i,i+1)) = 1 \text{ and } \sigma_{i, i+1} = R_{\frac{2\pi}{n-1}},
			\end{align}
		where addition and subtraction on the node labels is carried out modulo $n$. Then one verifies that $(G,\sigma)$ is inconsistent and the matrix $B \in\mathbb{R}^{2n\times 2n}$, which happens to be square, is nonsingular (this follows from~\cref{lemma:isomorphism-of-kernel-of-incidence}). Let $e_1 = \begin{bmatrix} 1 & 0 \end{bmatrix}^\top $ and define $\alpha,\beta\in\ell(V;\mathbb{R}^2)$ as follows:
			\begin{align}
				\alpha(i) = \begin{cases}e_1 & i = 0\\ 0_2 & \text{otherwise},\end{cases} \ \text{ and } \  \beta(i) = \begin{cases}e_1 & i = 1\\ 0_2 & \text{otherwise}.\end{cases}
			\end{align}
        Then the unique feasible flow between $\alpha,\beta$ is given by
			\begin{align}
				J((i, i+1)) &= \begin{cases}
					-R_{-(i-1)\varphi}e_1 &\text{ if }1\leq i \leq n-1\\
					0_d&\text{ if }i = 0,
				\end{cases}
			\end{align}
		where $\varphi = \frac{2\pi}{n-1}$. In other words, $J$ transports the vector $e_1$ across the circumference of the graph, rotating it slightly in accordance with $R_\varphi$ for each edge, until it arrives back at the start and is perfectly aligned with its initial orientation, even though the net distance it has traveled is exactly one node. Thus we have $\mathcal{W}_1^{\sigma}(\alpha,\beta) =\Omega(n) $ even though \textit{(i)} $\alpha,\beta$ correspond to vector fields which are equal to the same unit vector on neighboring nodes, and \textit{(ii)} the signature $\sigma$, while nontrivial, amounts only to a relatively small rotation. However, if we define the flow $K$ by
			\begin{align}
				K((i, i+1)) &= \begin{cases}
					e_1 & \text{ if }i = 0,\\
					0_d &\text{ if }1\leq i \leq n-1
				\end{cases}
			\end{align}
		then $\|K\|_{2,1} = 1$ and $\|BK - (\alpha-\beta)\|_\infty \leq \|(I - R_\varphi)e_1\|_2 \leq \|I - R_\varphi\|_F$ where $\|\cdot\|_F$ denotes the Frobenius matrix norm. Note that
			\begin{align}
				\| I - R_\varphi\|_F^2 &= 2(1-\cos\varphi)^2 + 2(\sin\varphi)^2 = 4(1-\cos\varphi) \leq  4\varphi^2 = O(n^{-2})
			\end{align}
		Thus, there exists a choice of $\delta = O(n^{-1})$ for which $\mathcal{W}_1^{\sigma,\delta, \lambda} \leq C_\lambda (1)$ where $C_\lambda$ is some constant depending only on $\lambda$.
	\end{example}

	\section{Examples and Experimental Data}\label{sec:examples}
	\note{This section is mostly new - some experiments from the original manuscript have been replaced.}
 
	This section aims to illustrate the preceding theoretical results with a series of examples and experimental data from both synthetic and real-world scenarios. In~\cref{subsec:lattice-graphs} we focus on two simple experiments for data defined on graphs with trivial connections. In~\cref{subsec:local-pca} we present two examples using connection graphs that have been derived from point clouds on surfaces using the local PCA algorithm described in~\cite{singer2012vector}. This includes using the optimal primal variable for the regularized Beckmann problem to interpolate vector fields on surfaces, namely a torus and the Stanford Bunny~\cite{turk1994zippered}; as well as using the HURDAT2 dataset~\cite{landsea2015revised} containing data for North Atlantic tropical storms and hurricanes to study the usage of the Beckmann problem as a distance metric between weather patterns, viewed as vector fields on the unit sphere.
	
	In each application, we solve the regularized Beckmann problem using either of the following possible methods: \textit{(1)} a gradient descent algorithm applied to the unconstrained dual formulation in~\cref{thm:rr-strong-duality} implemented in PyTorch~\cite{paszke2017automatic} with a learning rate of $5\times 10^{-3}$ and $10,000$ epochs by default, \textit{(2)} Splitting Conic Solver~\cite{odonoghue2016conic}, an off-the-shelf solver for large-scale constrained quadratic cone problems, as implemented in the CVXPY Python Package~\cite{cvxpy}. We deem Method \textit{(1)} more suitable for larger-scale problems owing to the seamless GPU acceleration afforded by the PyTorch package, and Method \textit{(2)} more suitable for smaller-scale problems where GPU acceleration is not needed and faster convergence is required.
		
	Similar to the convention used in~\cite{ES-18}, for a given flow $J$ we define the set of \textit{$\gamma$-active edges} to consist of the edges $ \{e \in E': \left\|J(e)\right\|_2 > \gamma\}$. The usage of $\gamma > 0$ tends to produce clearer visualizations without the presence of artifactual active edges. 
	
	\subsection{Examples: Trivial connection}\label{subsec:lattice-graphs}
	
	Transport between vector-valued or multi-material densities has been considered in the literature (see, e.g.,~\cite{solomon2014earth, li2016fast, li2018parallel, marchese2019multimaterial}). In this subsection we provide a worked written example for path graphs and an empirical example drawn from image processing. Both examples use connection graphs with trivial connections to highlight the initial properties of the connection Beckmann problem in the simplest setting.
	
	\begin{example}[Path Graphs]\label{ex:path-graphs}\normalfont
		The following provides intuition on the nature of the optimal flows attained in the experiments in the next two subsections. The setup is as follows: suppose $G$ is a path graph containing $n$ nodes with unit weighted edges $E' = \{(i,i+1): i \in [1,n-1]\}$. Let $\sigma$ be a $d$-dimensional trivial connection on the graph. Let $\alpha$ be a ``pseudo-Dirac" concentrated on the first channel of the first node, i.e., such that at the first node there is a single unit of mass concentrated in the first channel, while each of the remaining $d-1$ channels have a unit mass equally distributed across the nodes. Similarly, let $\beta$ be a pseudo-Dirac concentrated on the last channel of the last node. Precisely,
		\begin{align}
			\alpha(i)_{l} = \begin{cases}
				1, & i=1, l=1\\
				0, & i \in [2,n], l=1\\
				1/n, & i \in [1,n], l \in [2,d]
			\end{cases} \text{ and }
			\beta(i)_{l} = \begin{cases}
				1, & i=n, l=d\\
				0, &  i \in [1,n-1], l=d\\
				1/n, & i \in [1,n], l \in [1,d-1].
			\end{cases}
		\end{align}
		By writing out the equation $BJ = \alpha - \beta$, it follows that there is a single feasible flow that moves a mass of $1/n$ from the first channel to the last channel in one step. In particular, for $i \in [1,n-1]$ the feasible $J$ satisfies
		\begin{align}
			J((i,i+1))_l = \begin{cases}
				1-i/n, & l=1\\
				0 & l \in [2, d-1]\\
				(i+1)/n & l = d.
			\end{cases}
		\end{align}
		Overall, the optimal flow gradually moves the mass from the first channel to the last channel, leaving the other channels unaffected.
	\end{example}
	
	\begin{example}[Lattice graph]\label{ex:cat-horse}\normalfont
		In this example we explore the effect $\delta$ and $\lambda$ have on the relaxed-regularized Beckmann problem. Let $G$ denote a graph with node set given by $48\times48$ evenly spaced lattice points in the square of side length $47$, and with edges $e=\{x, y\}$ for those $x, y\in\mathbb{R}^2$ with $\|x-y\|_2 < 3.0$, so that the average degree of a node is approximately equal to $22.7$. The weight of each edge is given by $w_{xy} = \|x-y\|_2$. We equip each edge with a trivial $O(2)$ connection, that is, $\sigma_{xy} = I_{2\times 2}$. For the purposes of visualization and interpretability, we model the coordinates in $\mathbb{R}^2$ as red and blue color channels, respectively. Thus, a nonnegative vector field $\alpha\in\ell(V;\mathbb{R}^2)$ with coordinates bounded by, e.g., $255$ or $1.0$ may be understood as a square image across the red-blue color spectrum. With the graph set up, we obtain two vector fields $\alpha$ and $\beta$ as follows. Using an image of a blue cat and red horse, respectively, we downsample the image to the $48\times 48$ lattice using nearest-neighbor sampling and then normalize the coordinates so that
			\begin{align}
				\sum_{i\in V(G)} \alpha(i) = \begin{bmatrix}
					1 \\ 0
				\end{bmatrix}, \sum_{i\in V(G)} \beta(i) = \begin{bmatrix}
					0 \\ 1
				\end{bmatrix}.
			\end{align} 
		We note that $\alpha-\beta \notin\mathrm{ker}(L)$ and therefore the strict Beckmann problem in~\cref{eq:connection-beckmann-formulation-1} is infeasible. However, as explained in the proof of~\cref{lem:feasibility-rr}, we may pick $\delta>0$ small so as to ensure the feasible region for the relaxed-regularized problem~\cref{eq:relax-regularized-primal} is nonempty and does not contain the zero vector; this amounts to picking $\delta$ to belong approximately in the interval $4\times 10^{-4} <\delta < 4\times 10^{-3}$. We denote by $\delta^\ast$ the least such $\delta>0$ for which $0<\mathcal{W}^{1,\delta,\lambda}(\alpha,\beta)<\infty$, i.e., the minimum of the linear program:
			\begin{align}\label{eq:delta-star}
				\delta^\ast = \min_{J\in\ell(E';\mathbb{R}^d)} \|BJ - (\alpha-\beta)\|_\infty \approx 4.34\times 10^{-4}.
			\end{align}
		We then consider the problem $\mathcal{W}_1^{\sigma,\delta,\lambda}$ for various choices of $\delta,\lambda$ as shown in~\cref{fig:cat-horse-2}. Each choice gives rise to a unique optimal flow $J_{\delta,\lambda}\in\ell(E';\mathbb{R}^2)$ which associates to each edge $e\in E'$ a vector $J_{\delta,\lambda}(e)\in\mathbb{R}^2$. We visualize these vectors by taking the absolute value of each coordinate and rendering the vector in the red-blue color spectrum by using its coordinates as weights or convex coefficients. Then, we render each edge with corresponding color and opacity proportional to $\|J_{\delta,\lambda}(e)\|_2$. Only the ($\gamma=10^{-3}$)-active edges are shown. We remark that for fixed $\delta$, as $\lambda$ increases $J_{\delta,\lambda}(e)$ generally becomes less concentrated in each color channel, and for fixed $\lambda$, as $\delta$ increased the flow $J_{\delta,\lambda}$ degenerates to zero.
	\end{example}

	\begin{figure}[h!]
		\begin{center}
			\begin{subfigure}[b]{0.3\textwidth}\centering
				\includegraphics[width=\textwidth]{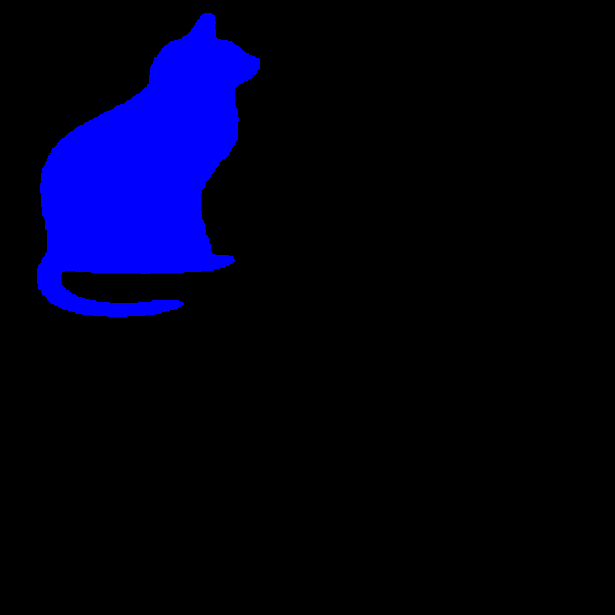}
				\caption{The vector field $\alpha$ is obtained from an image of a blue cat}
			\end{subfigure}\hspace*{.5in}
			\begin{subfigure}[b]{0.3\textwidth}\centering
				\includegraphics[width=\textwidth]{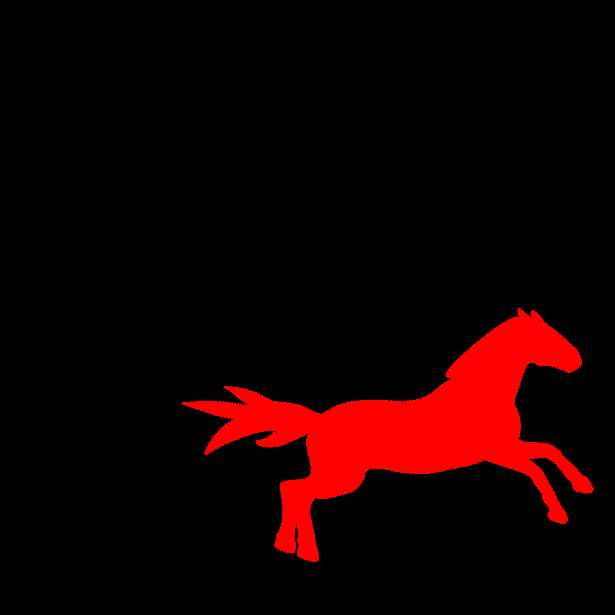}
				\caption{The vector field $\beta$ is obtained from an image of a red horse}
			\end{subfigure}

			\begin{subfigure}[b]{0.24\textwidth}\centering
				\includegraphics[width=\textwidth]{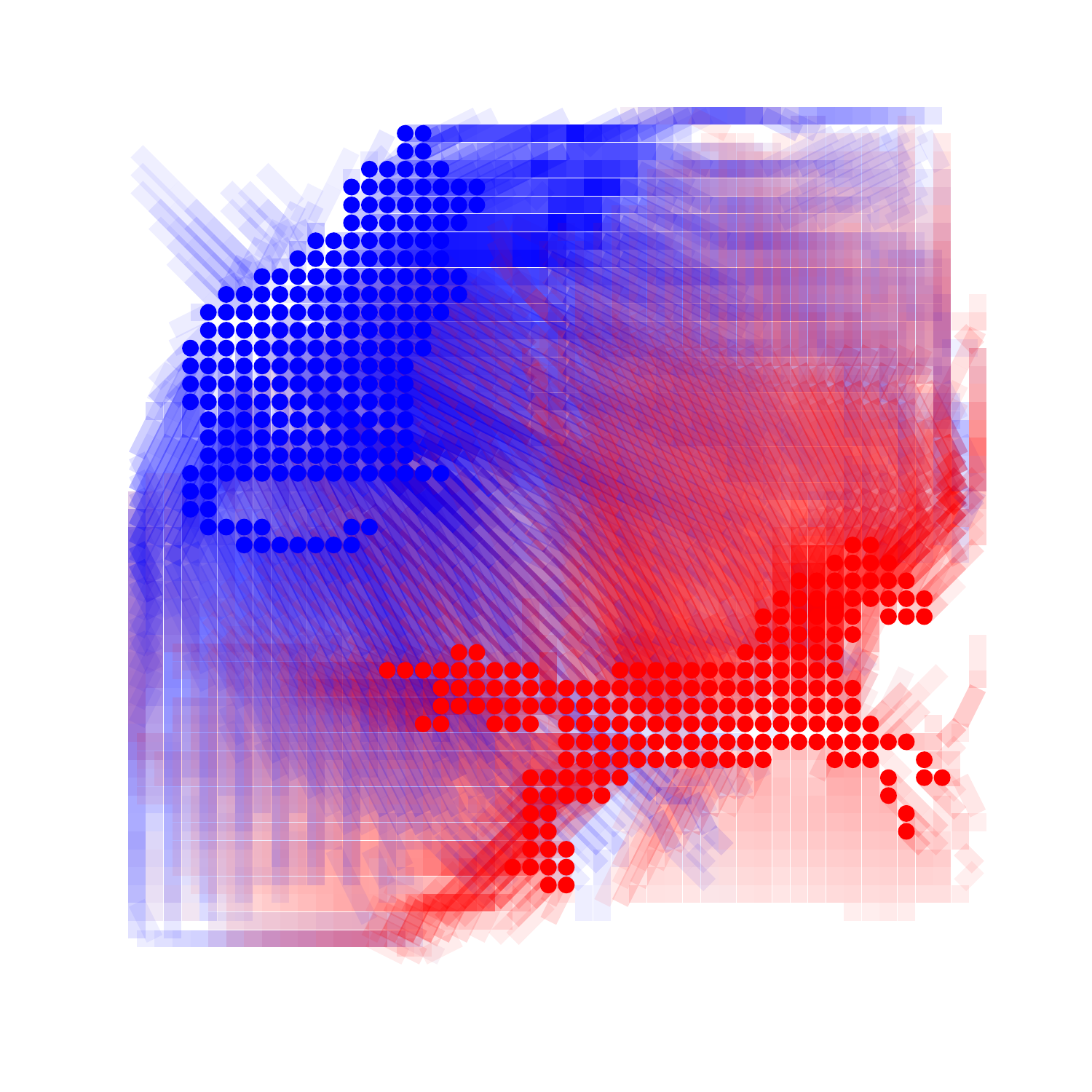}
				\caption{$\lambda=1.0$,\\ $\delta=5\times 10^{-4}$}
			\end{subfigure}
			\begin{subfigure}[b]{0.24\textwidth}\centering
				\includegraphics[width=\textwidth]{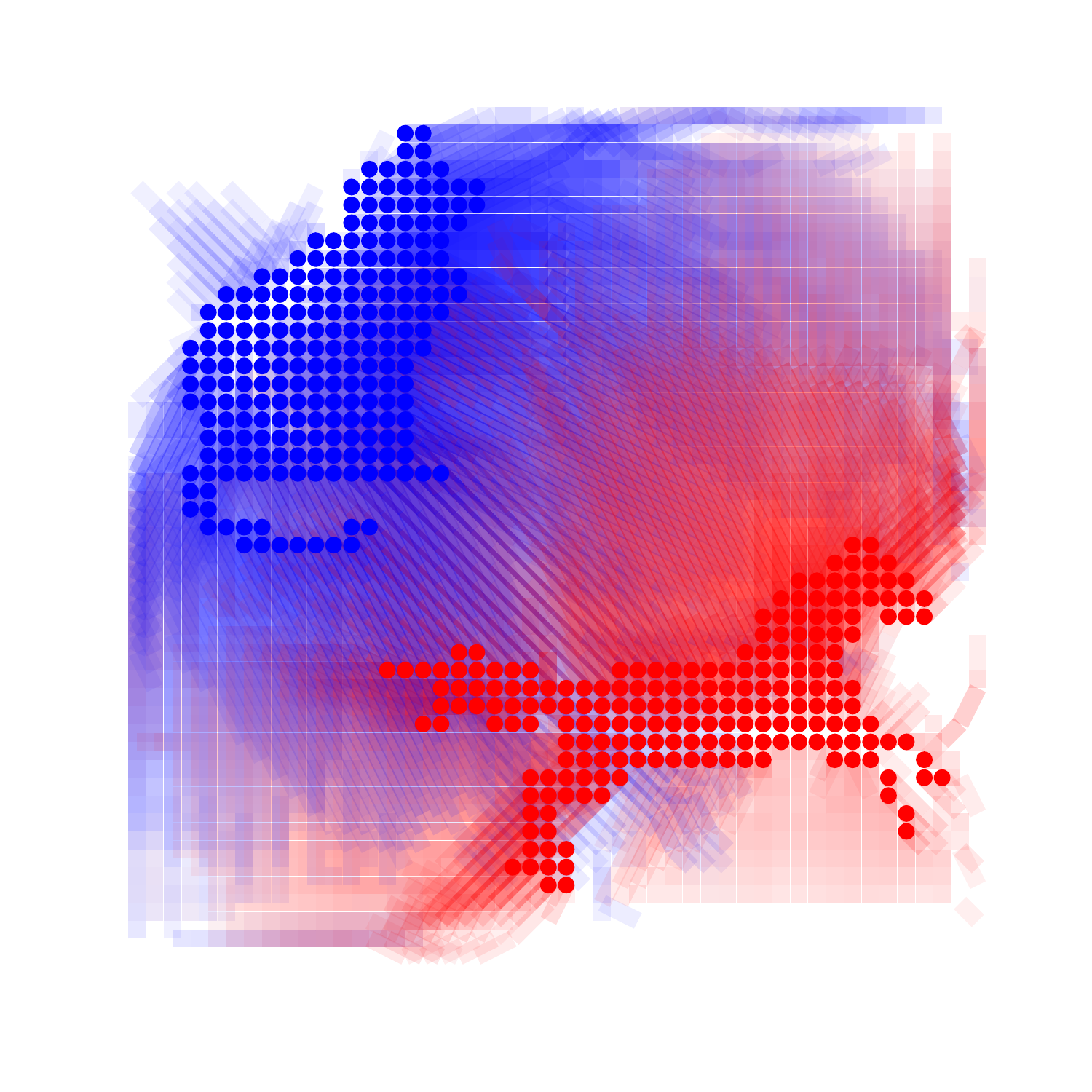}
				\caption{$\lambda=10$,\\ $\delta=5\times 10^{-4}$}
			\end{subfigure}
			\begin{subfigure}[b]{0.24\textwidth}\centering
				\includegraphics[width=\textwidth]{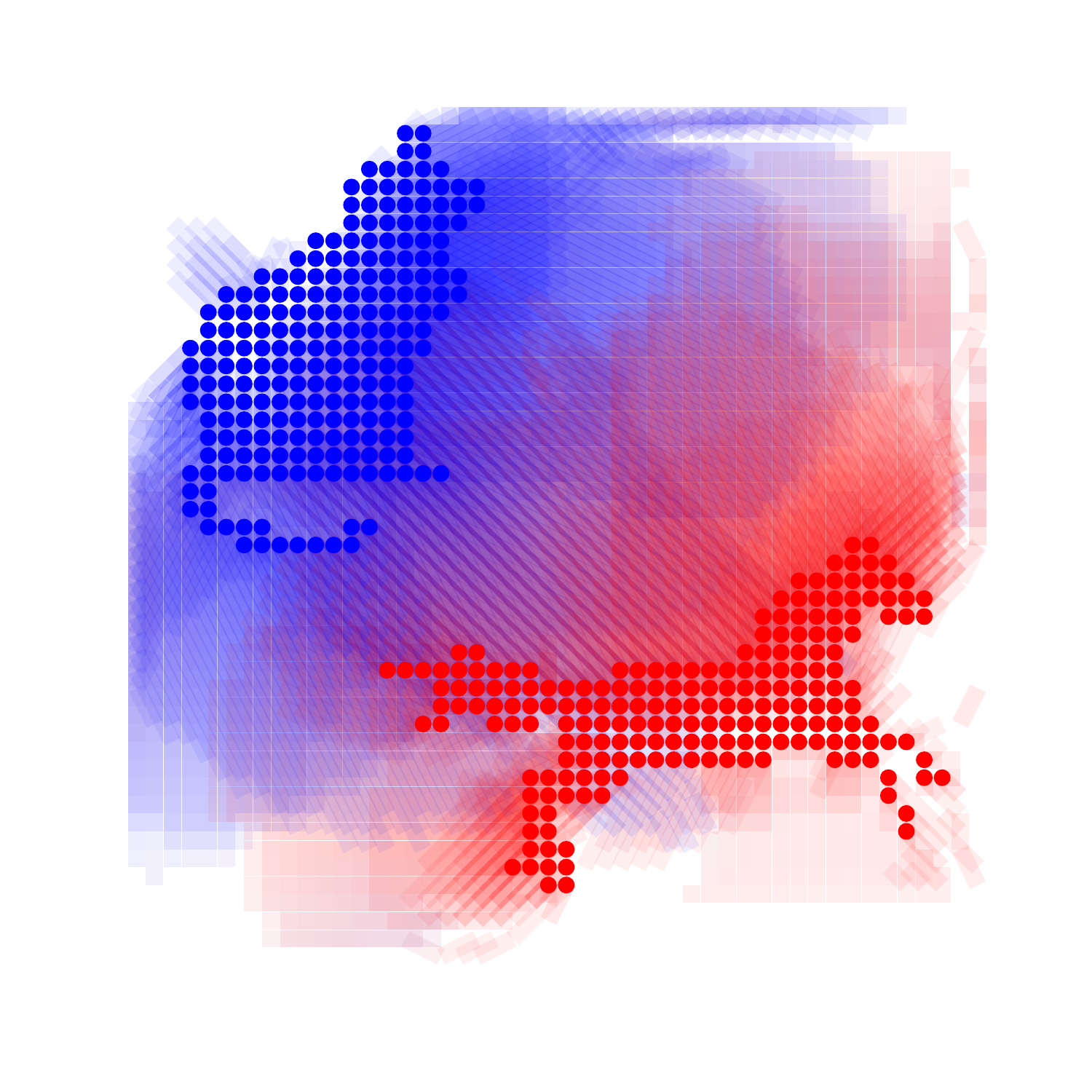}
				\caption{$\lambda=100$,\\ $\delta=5\times 10^{-4}$}
			\end{subfigure}
			\begin{subfigure}[b]{0.24\textwidth}\centering
				\includegraphics[width=\textwidth]{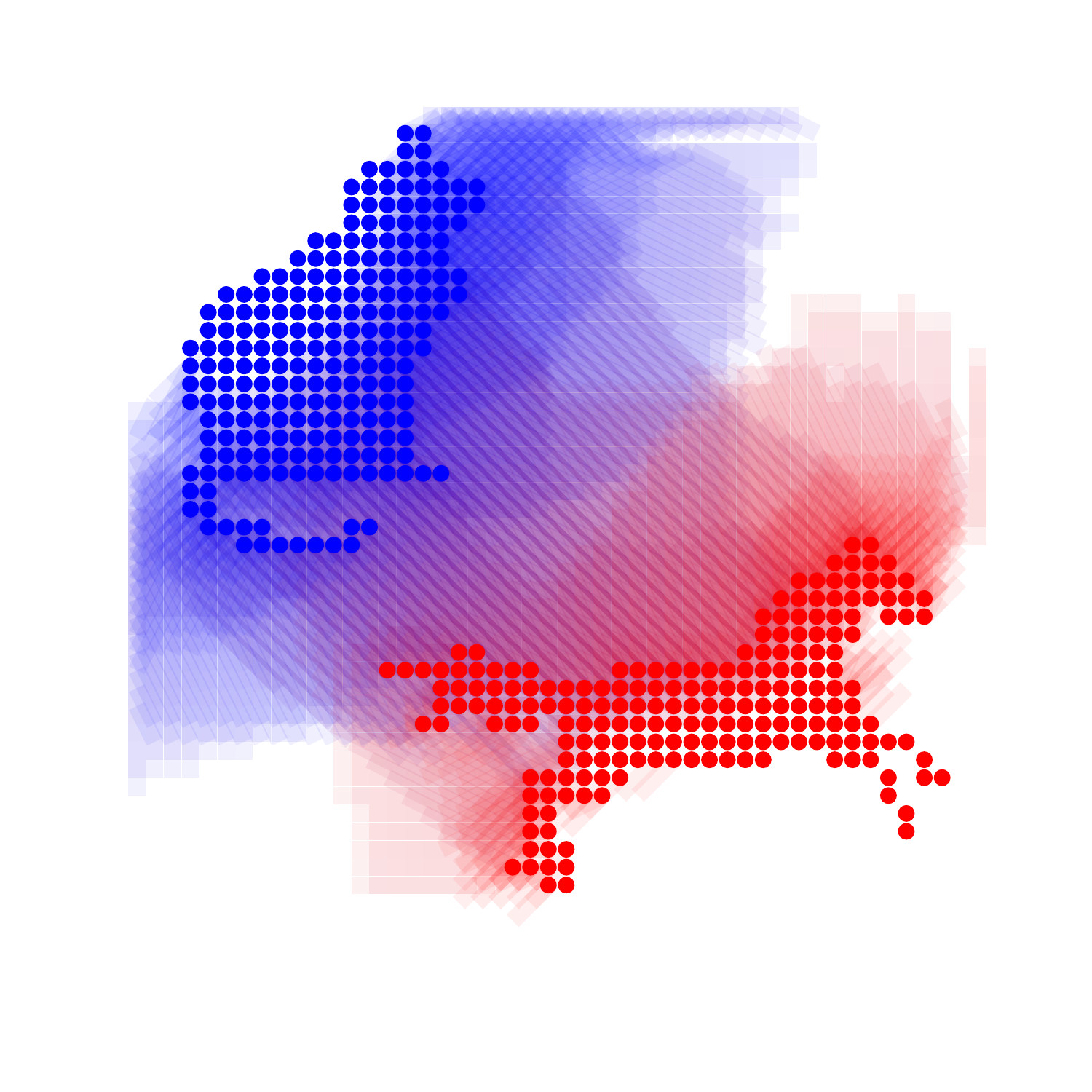}
				\caption{$\lambda=10^3$,\\ $\delta=5\times 10^{-4}$}
			\end{subfigure}

			\begin{subfigure}[b]{0.24\textwidth}\centering
				\includegraphics[width=\textwidth]{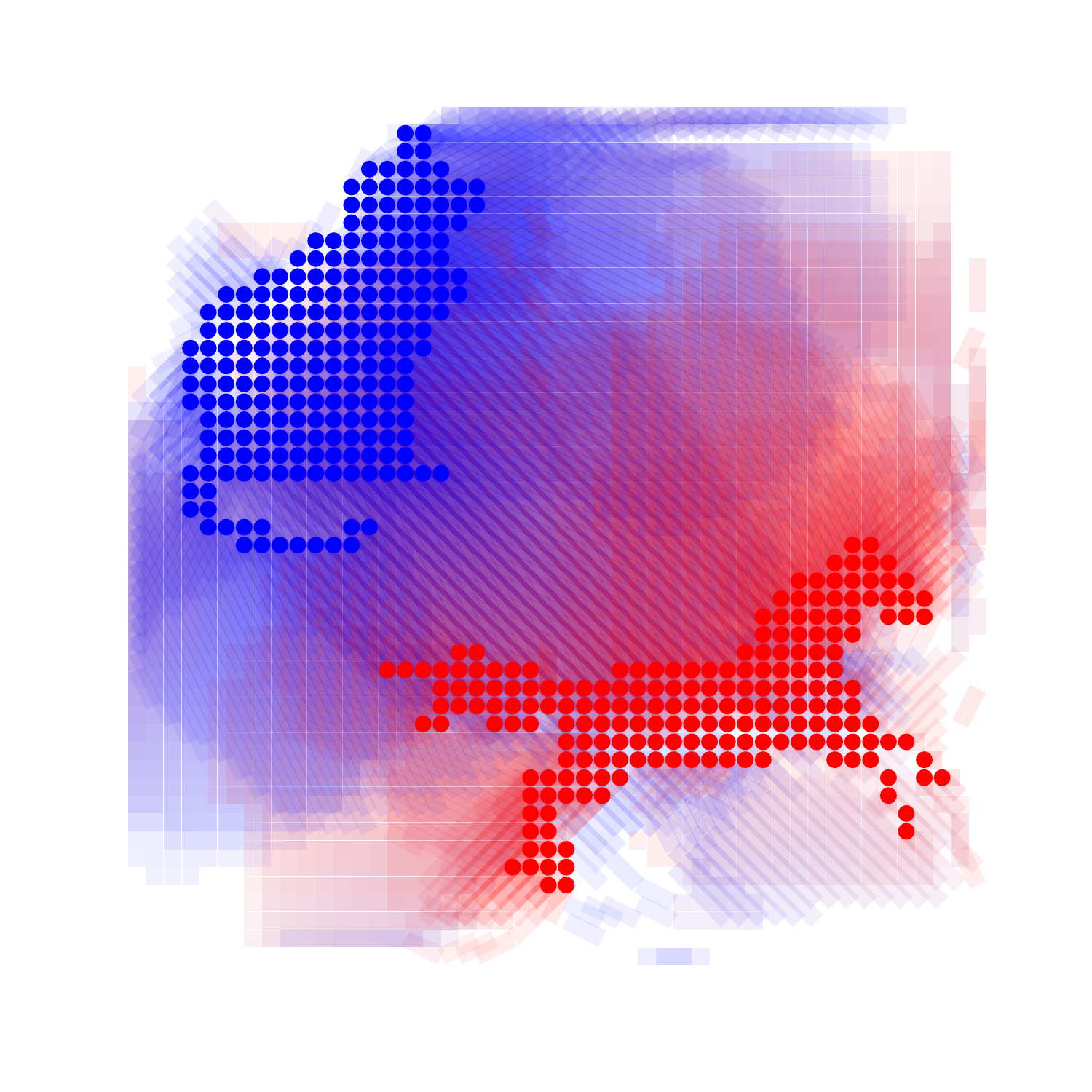}
				\caption{$\lambda=100$,\\ $\delta=\delta^\ast$}
			\end{subfigure}
			\begin{subfigure}[b]{0.24\textwidth}\centering
				\includegraphics[width=\textwidth]{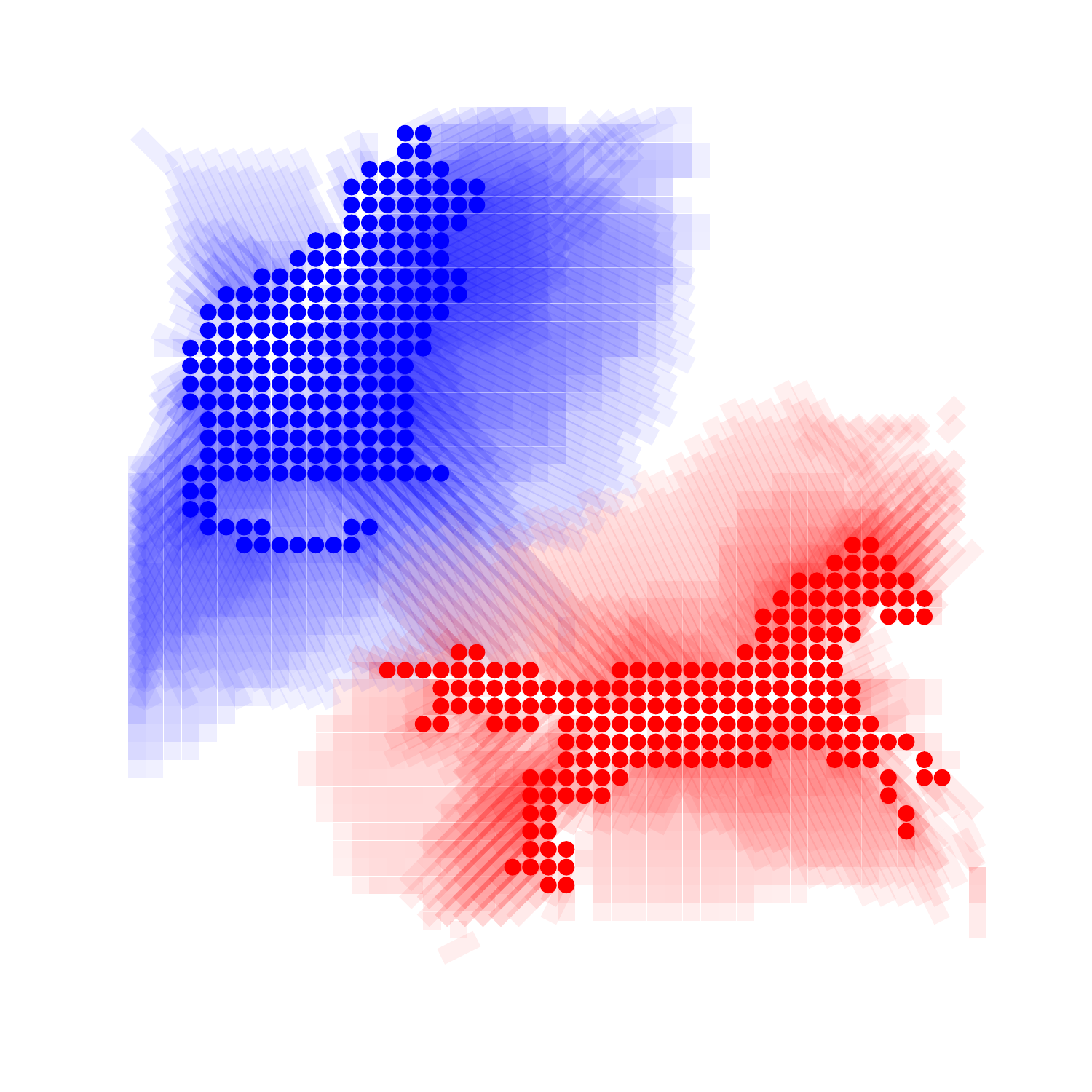}
				\caption{$\lambda=100$,\\ $\delta=2\delta^\ast$}
			\end{subfigure}
			\begin{subfigure}[b]{0.24\textwidth}\centering
				\includegraphics[width=\textwidth]{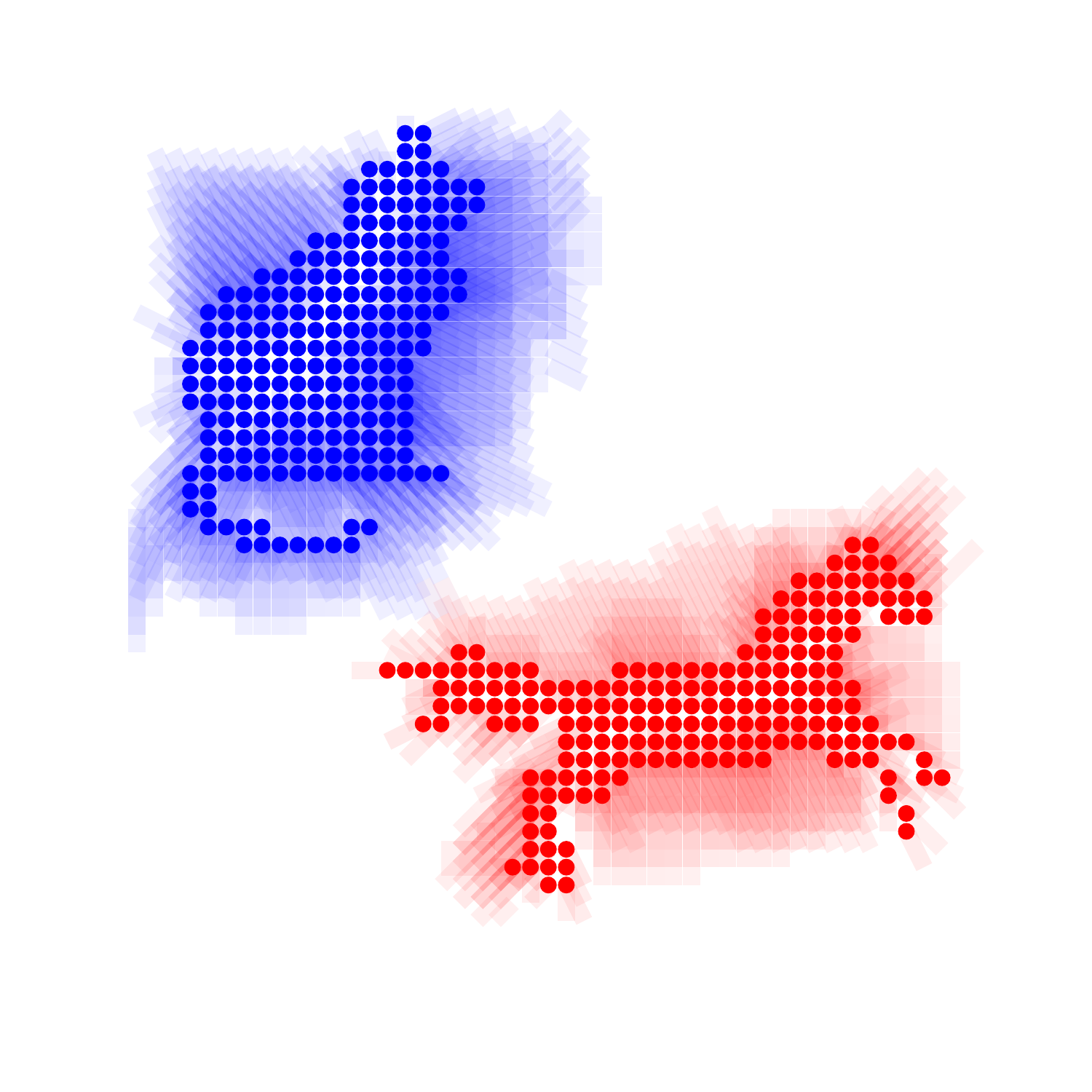}
				\caption{$\lambda=100$,\\ $\delta=3\delta^\ast$}
			\end{subfigure}
			\begin{subfigure}[b]{0.24\textwidth}\centering
				\includegraphics[width=\textwidth]{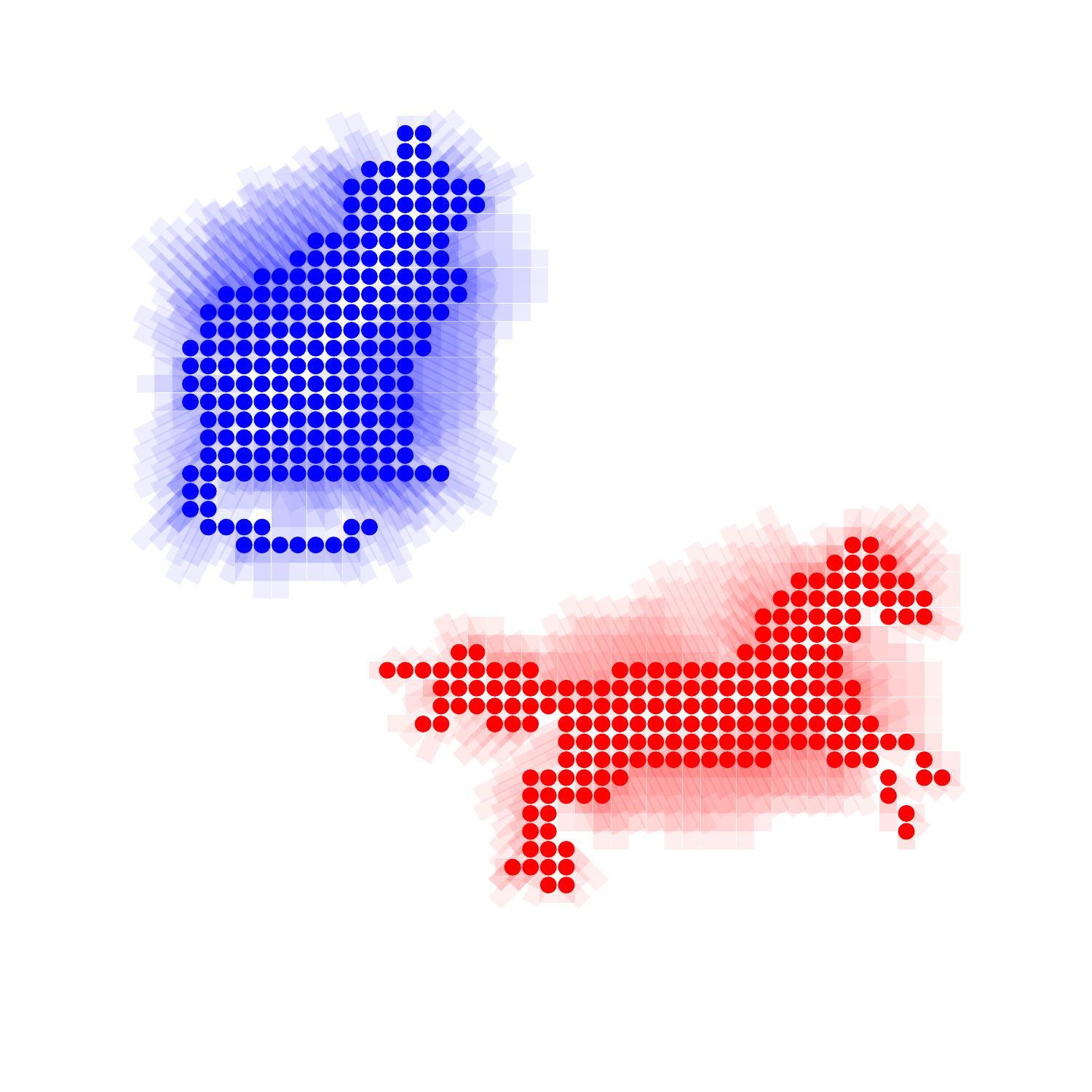}
				\caption{$\lambda=100$,\\ $\delta=4\delta^\ast$}
			\end{subfigure}
		\end{center}
		\captionsetup{width=.9\linewidth}
		\caption{Visualizations of the experiment described in~\cref{ex:cat-horse}. \textbf{(a)-(b)} Illustrations of the images which are used to construct $\alpha,\beta$ upon sampling to the nodes of the $48\times 48$ lattice graph $G$. \textbf{(c)-(f)} Visualizing the optimal flows $J_{\delta,\lambda}$ where $\delta$ is fixed and $\lambda=1,10,100,10^3$. We overlay $\alpha,\beta$ in the images to provide a spatial reference for the flows. The procedure by which we are rendering $J_{\delta,\lambda}$ is described in~\cref{ex:cat-horse}. \textbf{(g)-(j)} Visualizing the optimal flows $J_{\delta,\lambda}$ where $\lambda=10^2$ is fixed and $\delta=\delta^\ast,2\delta^\ast, 3\delta^\ast, 4\delta^\ast$. Here, $\delta^\ast$ is as in~\cref{eq:delta-star}.}
		\label{fig:cat-horse-2}
	\end{figure}

	\subsection{Experiments: Local PCA on Surfaces}\label{subsec:local-pca}

	In this subsection we highlight two additional examples where the underlying graph and its connection Laplacian are obtained from point clouds which lie on surfaces embedded in $\mathbb{R}^3$ and whose connection Laplacian has been determined via Local PCA~\cite{singer2012vector}. For clarity and completeness, we outline the general setup along the lines of which both examples adhere as follows. For a detailed step-by-step description, see~\cite[Sec. 2]{singer2012vector}.
	
	Let $X \in\mathbb{R}^{p\times n}$ be a matrix consisting of $n$ points $\{x_i\}_{i=1}^n$ belonging to $\mathbb{R}^p$ which we assume have been sampled on or near a $d$-manifold $\mathcal{M}$ which is embedded in $\mathbb{R}^p$. Let $V = \{x_i\}_{i=1}^n$ and at each point $x_i$, add edges for each instance $x_j$ such that $ 0 < \|x_i - x_j\| < \epsilon$ for some pre-chosen $\epsilon >0$. For each $i$, define $X_i\in\mathbb{R}^p\times N_i$ to be the feature matrix consisting of the embedding coordinates of each of the $N_i$ neighbors of node $x_i$, centered according to $x_i$, i.e.,
		\begin{align}
			X_i = \begin{bmatrix}
				x_{i_1} - x_i & \cdots & x_{i_{N_i}} - x_i
				\end{bmatrix}.
		\end{align} 
	Let $D_i$ be the $N_i\times N_i$ diagonal matrix with entries $D(j, j) = K\left(\frac{\|x_i - x_{i_j}\|}{\sqrt{\epsilon}}\right)$ where $K:[0, 1]\rightarrow\mathbb{R}$ is the Epanechnikov kernel. Next, define the $p\times N_i$ matrix $B_i = X_i D_i$, and write its a singular value decomposition in the form $B_i = U_i\Sigma_i V_i^\top $. Let $O_i$ be the $p\times 2$ matrix consisting of the first two left singular vectors from $U_i$ (note: the general setup uses the singular values $\Sigma_i$ to estimate $d$, but in each of the following examples, $d=2$ is taken as known). We treat $O_i$ as an approximate orthonormal basis for the tangent space $T_{x_i}\mathcal{M}$ and thus, for each edge $\{i, j\}$, obtain the connection $\sigma_{ij}$ by Orthogonal Procrustes, i.e., 
		\begin{align}
			\sigma_{ij} = \min_{O\in O(d)} \|O - O_i^\top O_j\|_{HS}
		\end{align}
	where $\|\cdot\|_{HS}$ is Hilbert-Schmidt norm. This can be solved with an additional singular value decomposition step, namely, by writing $O_i^\top O_j = U\Sigma V^\top $, we may thusly set $\sigma_{ij} = UV^\top $. A visualization of the near neighbor graph and orthonormal frames $O_i$ can be seen in~\cref{fig:local-pca-drawing}.
	
	Note that, as discussed in~\cite{singer2012vector}, manifolds $\mathcal{M}$ with curvature will tend to induce connection Laplacians that are inconsistent, an observation which is replicated in the following experiments.


	\begin{figure}
        \centering
        \includegraphics[width=0.45\linewidth]{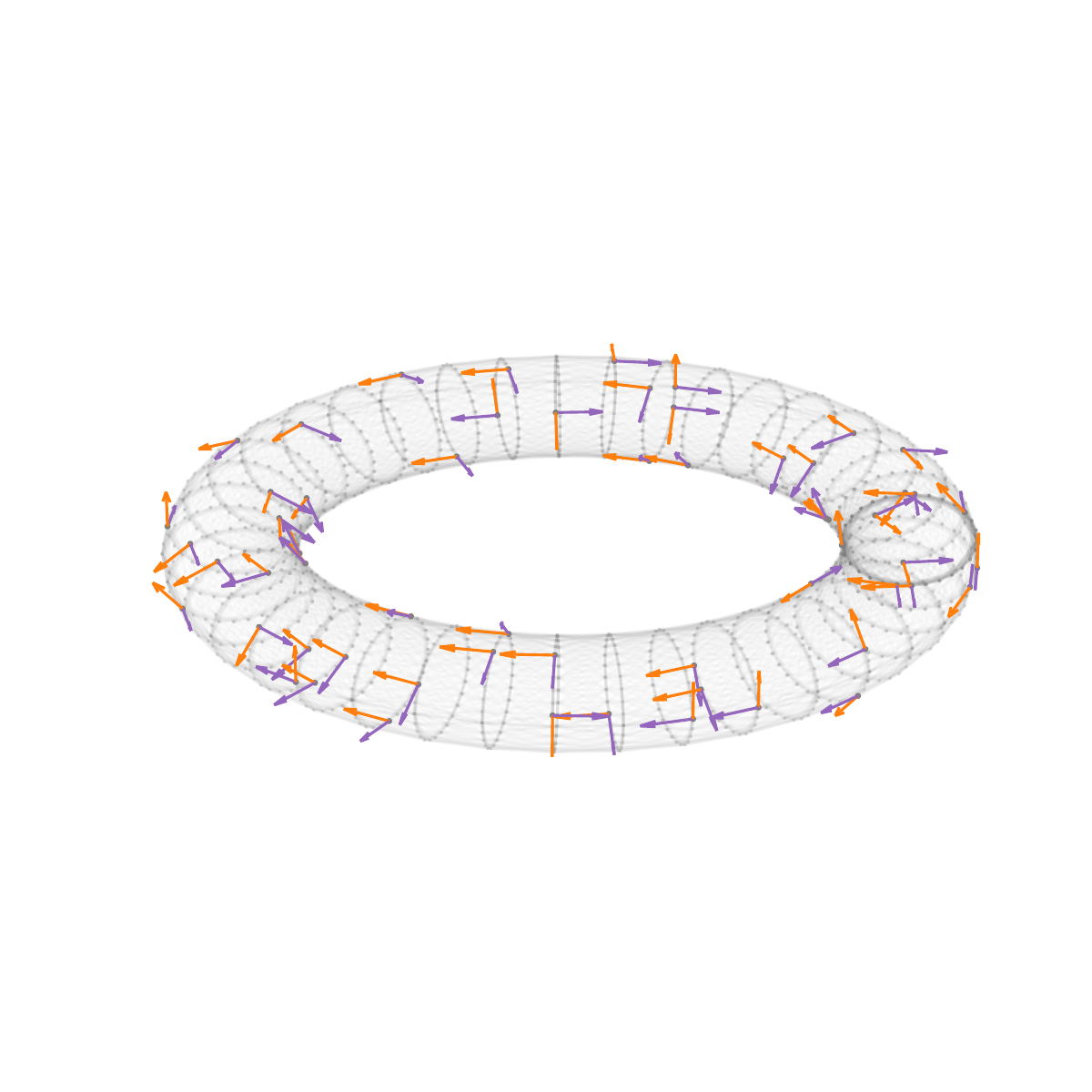}
        \includegraphics[width=0.45\linewidth]{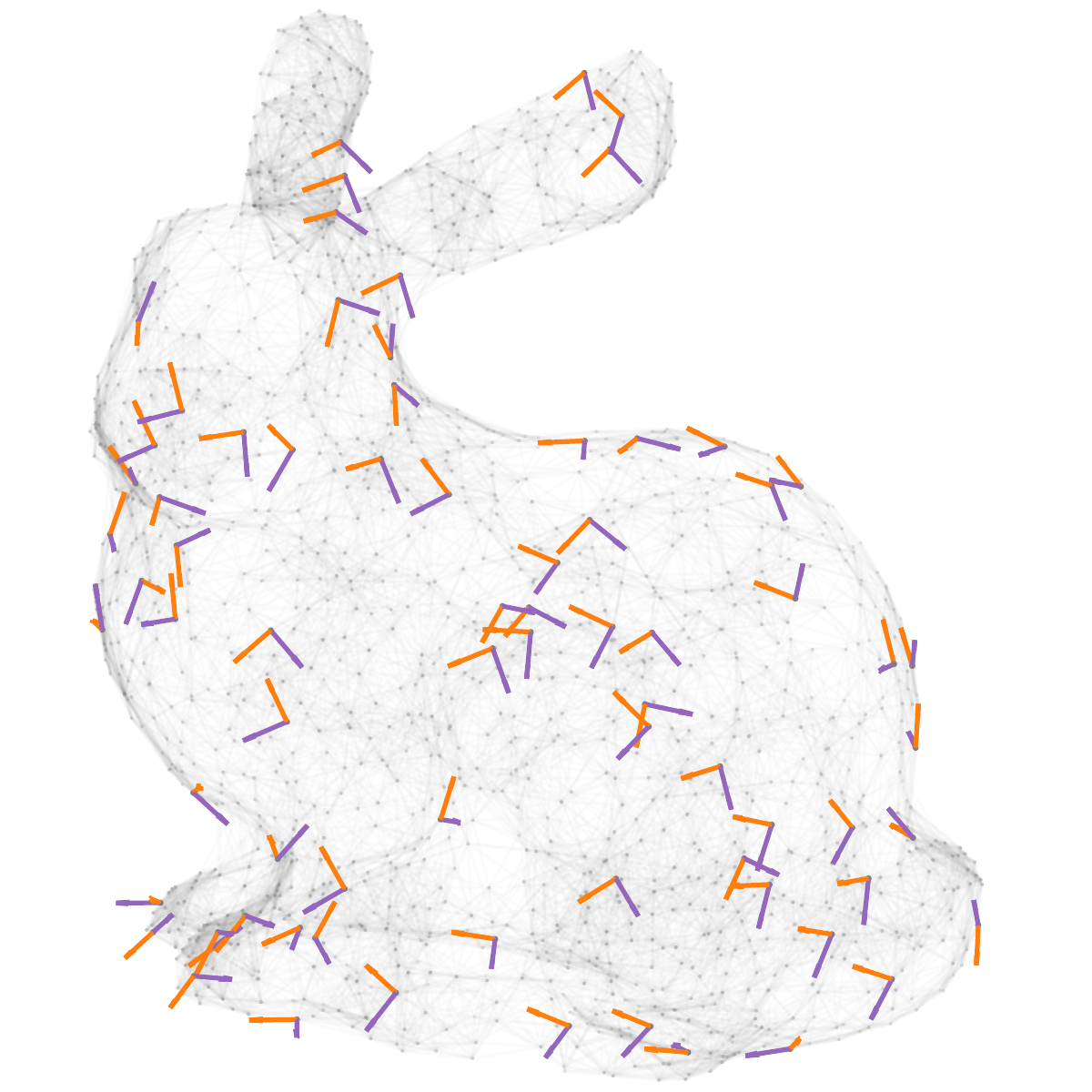}
		\captionsetup{width=.9\linewidth}
		\caption{(\textbf{Left}) An illustration of the graph $G_t$ obtained by sampling points from the parameter domain $(\theta, \psi) \in [0, 2\pi]^2$ of the standard parameterization of a torus $(\theta, \psi)\mapsto ((R + r\cos\theta)\cos\psi, (R + r\cos\theta)\sin\psi, r\sin\theta)$ for $R=5$ and $r=1$ with $\epsilon = 1$. Following the procedure and notation outlined at the beginning of~\cref{subsec:local-pca}, we obtain the orthonormal vectors $O_i$ approximating the tangent space at each node, and render them in orange and purple for 5\% of the nodes selected uniformly at random. (\textbf{Right}) Similarly, an illustration of the graph $G_b$ obtained by sampling 1500 points from the Stanford Bunny 3D mesh~\cite{turk1994zippered} with $\epsilon = 0.015$ as well as, for 5\% of the nodes, the orthonormal vectors $O_i$ obtained from the procedure outlined in~\cref{subsec:local-pca} shown in orange and purple.}
        \label{fig:local-pca-drawing}
    \end{figure}

	\begin{example}[Vector Field Trajectory Interpolation]\label{ex:vector-field-reconstruction}\normalfont
		Trajectory interpolation and reconstruction is a topic of interest in several areas of applied mathematics, including bioinformatics and single-cell RNA sequencing~\cite{huguet2022manifold} and computer graphics~\cite{solomon2019optimal, xu2019quadratic} to name two examples. In this example, we provide experimental evidence that it is possible to naturally recover a discrete-time trajectory between given vector fields $\alpha,\beta$ by modifying an optimal flow $J$ for the Beckmann problem on a connection graph. The method presented below hints at possible directions for further application of our model and additionally offers a layer of interpretation for solutions obtained from the connection Beckmann problem. 
  
        Namely, suppose $\alpha,\beta$ are two feasible vector fields on a connection graph $(G,\sigma)$ and one wishes to reconstruct a discrete-time trajectory with $\alpha,\beta$ as endpoint states. The basic idea is to partition the edges into rings centered around $\alpha$, and slowly propagate $\alpha$ to $\beta$ along the rings by restricting $J$ iteratively. We assume for simplicity that the edges of $G$ are unit weight, but this method can be extended to real-valued weights.
		
		To this end, let $V_\alpha = \{i\in V: \|\alpha(i)\|_2 > 0\}$ be the nodal support of $\alpha$, and for an edge $\{i, j\}\in E$ let $r_{ij} = \min\{ d(i, V_\alpha), d(j, V_\alpha)\}$ where $d(i, V_\alpha) = \min\{d(i, k): k\in V_\alpha\}$. Then, for each $k=0, 1, 2,\dotsc$ define the distance-$k$ ``disk":
			\begin{align}
				E_k = \{e = (i, j): r_{ij} < k \}\subseteq E'.
			\end{align}

		Now let $J$ be an optimal flow for $\mathcal{W}^{\sigma,\lambda,\delta}_1(\alpha,\beta)$ and set $J_k = J \mathbf{1}_{E_k}$, where $\mathbf{1}_{E_k}\in\ell(E')$ is the indicator vector of $E_k$. Finally, write $\alpha_k = \alpha - BJ_k$. Then $\alpha_0 = \alpha$, and as $k$ increases, $\alpha_k$ ``approaches" $\beta$ along the transportation route determined by $J$, such that for $k$ large enough to exhaust the distance between the support of $\alpha$ and $\beta$ (and certainly at most the diameter of $G$), $\alpha_k = \beta$. This method will be most effective when $\alpha,\beta$ have isolated and relatively concentrated supports.
		
		We demonstrate this method with two examples: the first in~\cref{fig:torus} with points sampled from a torus, and the second in~\cref{fig:bunny} with points sampled from the Stanford Bunny~\cite{turk1994zippered}.
	\end{example}

	\begin{figure}
		\centering
		\begin{subfigure}[b]{\textwidth}
			\centering
			\includegraphics[width=\textwidth]{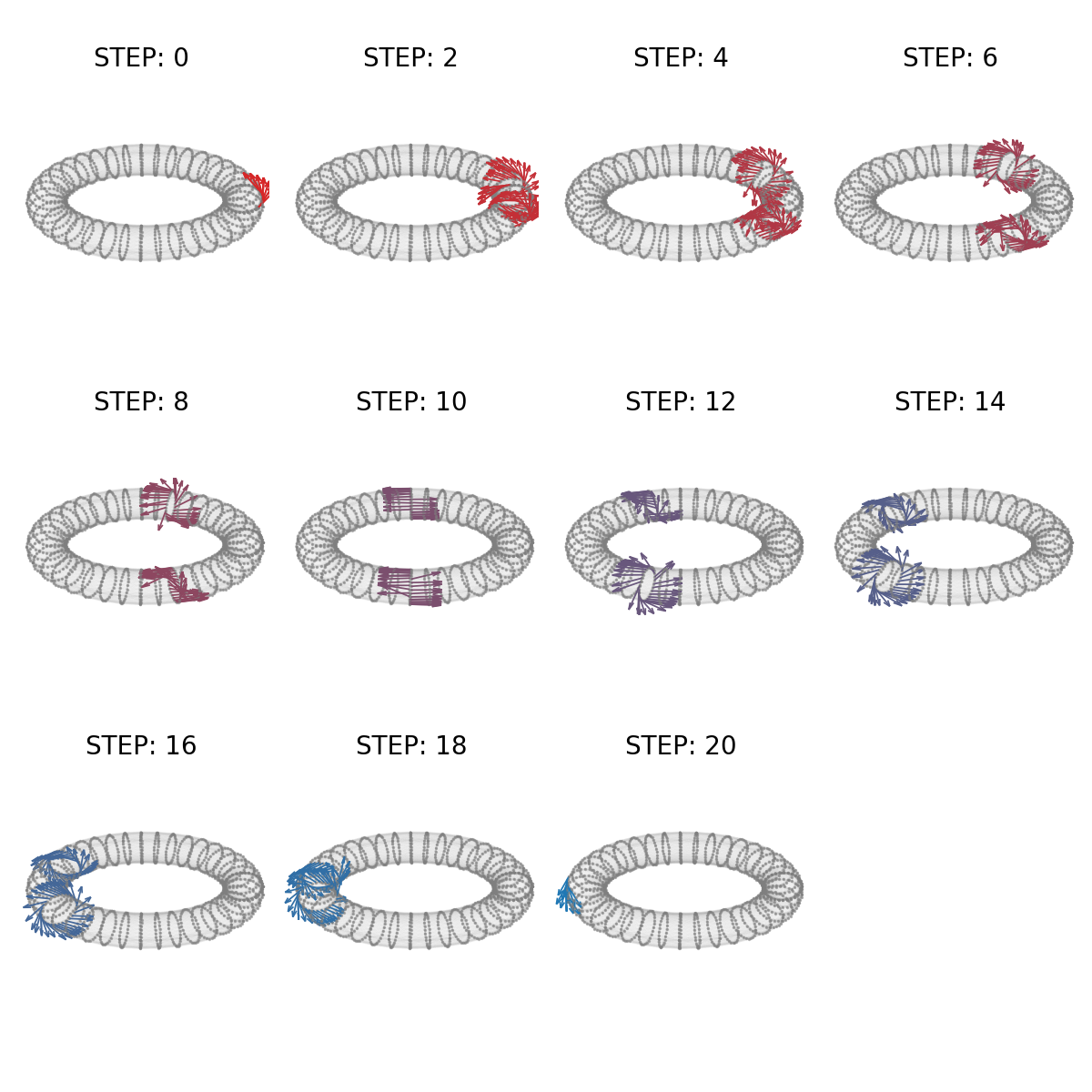}
		\end{subfigure}
		\captionsetup{width=.9\linewidth}
		\caption{Continuing from~\cref{fig:local-pca-drawing}, we implement the vector field trajectory interpolation algorithm in~\cref{ex:vector-field-reconstruction} using the graph $G_t$, with $O_i\alpha_k(i)$ shown at each node $i$ in red for step $k=0$ and $O_i\beta(i)$ at step twenty. $\alpha$ (resp. $\beta$) was chosen by setting $\alpha(i) = O_i^\dagger\begin{bmatrix}0 & 0 & 1\end{bmatrix}^\top $ (resp. $\beta(i) = O_i^\dagger\begin{bmatrix}0 & 0 & -1\end{bmatrix}^\top $) whenever $\|x_i - s \| < 0.75 $ (resp.  $\|x_i - t \| < 0.75 $) and zero otherwise where $s\in\mathbb{R}^3$ (resp. $t\in\mathbb{R}^3$) is the right-most (resp. left-most) endpoint of $G_t$ and $O_i^\dagger$ is the matrix pseudo-inverse of $O_i$. $\lambda=100$ and $\delta\approx 10^{-7}$ were used for this experiment. Vector field colorings are obtained from a linear interpolation from red to blue independent of the vector fields or optimal flows.}\label{fig:torus}
	\end{figure}

	\begin{figure}
		\centering
		\begin{subfigure}[b]{\textwidth}
			\centering
			\includegraphics[width=0.9\textwidth]{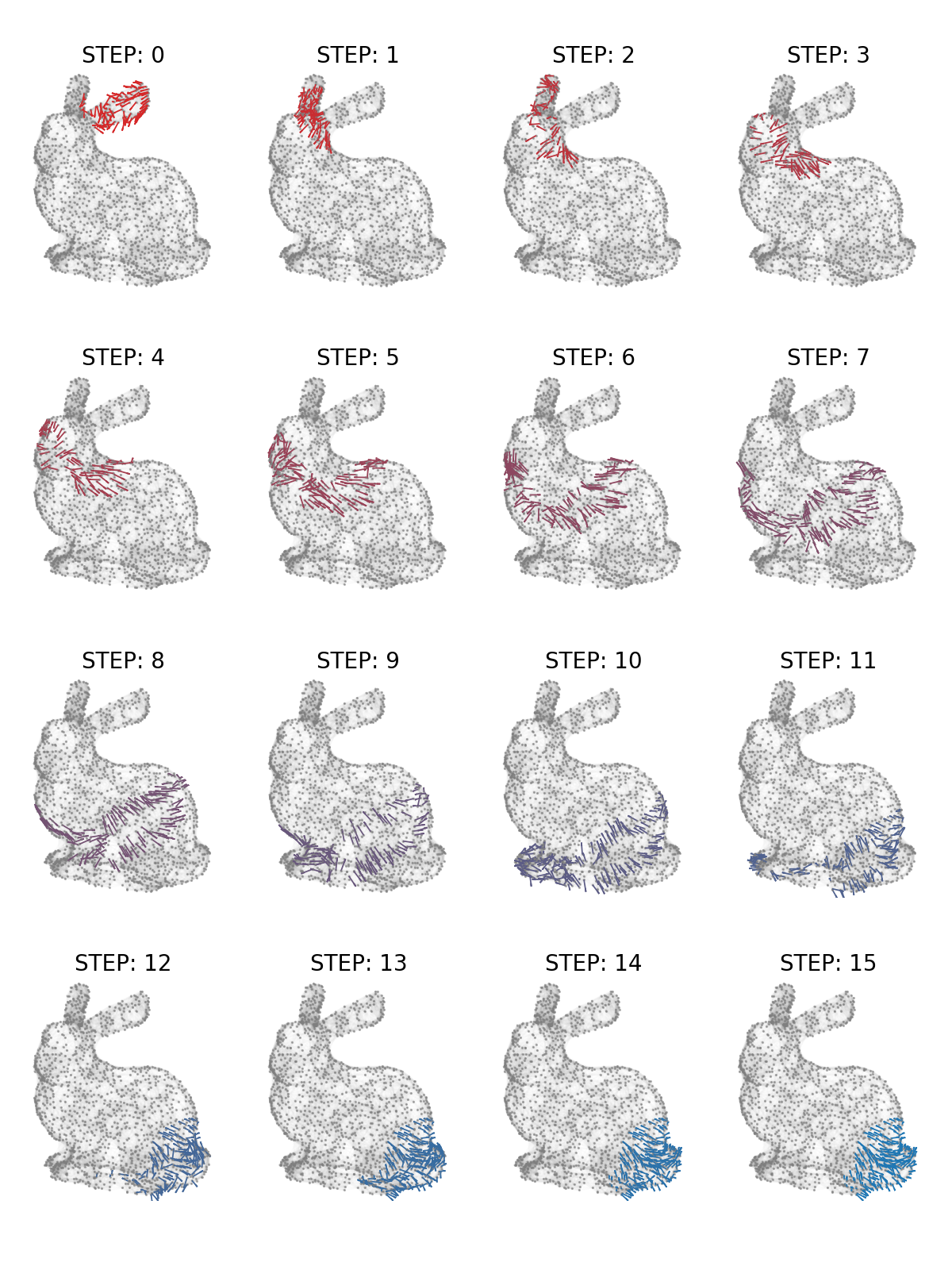}
		\end{subfigure}
		\captionsetup{width=.9\linewidth}
		\caption{Continuing from~\cref{fig:local-pca-drawing}, we implement the vector field trajectory interpolation algorithm in~\cref{ex:vector-field-reconstruction} using the graph $G_b$, with $O_i\alpha_k(i)$ shown at each node $i$ in red for step $k=0$ and $O_i\beta(i)$ at step fifteen. $\alpha$ (resp. $\beta$) was chosen by setting $\alpha(i) = O_i^\dagger\begin{bmatrix}0 & 0 & 1\end{bmatrix}^\top $ (resp. $\beta(i) = O_i^\dagger\begin{bmatrix}0 & 0 & -1\end{bmatrix}^\top $) whenever $\|x_i - s \| < 0.05 $ (resp.  $\|x_i - t \| < 0.05 $) and zero otherwise where $s, t\in\mathbb{R}^3$ are two points on the mesh chosen without particular preference and $O_i^\dagger$ is the matrix pseudo-inverse of $O_i$. $\lambda=100$ and $\delta\approx 10^{-7}$ were used for this experiment. Vector field colorings are obtained from a linear interpolation from red to blue independent of the vector fields or optimal flows.}\label{fig:bunny}
	\end{figure}

	\clearpage

	\begin{example}[Modeling Hurricane Trajectories]\label{ex:hurricanes}\normalfont
		Unsupervised learning models for datasets consisting of vector fields or trajectories have been proposed and investigated in the context of several distinct domains of interest, including human neurological models~\cite{reichenbach2015v} and meteorological data~\cite{ferreira2013vector} (for a recent survey on the topic, see~\cite{bian2018survey}). In this example, following~\cite{ferreira2013vector}, we make use of the HURDAT 2 Dataset~\cite{landsea2015revised}, which is a collection of trajectories taken by tropical depression, storms, and hurricanes in and around the Northern Atlantic Ocean, and which is maintained by NOAA. We model each trajectory as a vector field, described below, and then use $\mathcal{W}^{\sigma,\lambda,\delta}_{1}(\cdot, \cdot)$ as a distance matrix to feed into a spectral clustering algorithm.
		
		To describe the pre-processing setup, the graph model of the North Atlantic region $G_e$ is obtained by sampling points from a section of the unit sphere using the parameterization 
			\begin{align}\label{eq:globe}
				&(\theta,\psi)\mapsto (\sin(\pi / 2 - \theta)\cos(-\psi) , \sin(\pi / 2 - \theta)\sin(-\psi), \cos(\pi / 2 - \theta)),\\
				&(\theta,\psi)\in\left[7\frac{\pi}{180}, 67\frac{\pi}{180}\right]\times \left[0, 120\frac{\pi}{180}\right].
			\end{align}
		The steps highlighted at the beginning of~\cref{subsec:local-pca} were then carried out, yielding a connection $\sigma_e$ on the sphere section that is inconsistent, albiet only slightly (the smallest two eigenvalues are approximately $9\times 10^{-5}$, suggesting the connection is very close to being consistent). 
		
		Each entry of the HURDAT 2 database consists of timestamps $(t_i)$ and locations $(x_i)$ for the course of a corresponding tropical storm. We convert the time series of latitude and longitude data into a time-indexed sequence of points $(y_i), y_i\in \mathbb{R}^3$ via \cref{eq:globe}, and then evaluate its gradient in time using a finite element method $\nabla y_i \approx y_{i+1} - y_{i}$ to obtain tangent vectors at each of the positions. Then, these gradient vectors are normalized and relocated to the nearest corresponding point that is on the spherical mesh and averaged when overlaps occur. Thus each node $k\in V$ witnesses a (possibly zero) vector $\widehat{y}_k \in\mathbb{R}^3$. We then set $\alpha(k) = O_i^\top  y_k$ to be the value of the hurricane vector field at $k$ by projecting $y_k$ onto the orthonormal vectors selected to approximate the tangent space to the sphere at $k$.
		
		With the setup in hand, one straighforward application of this framework is to treat $\mathcal{W}^{\sigma,\lambda}_{1}(\alpha_i,\alpha_j)$ as a distance kernel defined on tropical storms, which can thus be used to produce an unsupervised learning model to categorize tropical storms. We explore the results of this model in~\cref{fig:hurdat-example} and found that a ``large cluster'' of storms which have made landfall in the Southeastern United States emerged alongside three ``small clusters'' which made little or no landfall and which are separated according to geography (\textit{c.f.} the findings in~\cite{ferreira2013vector}). We intend for this to function as proof of concept for the usage of $\mathcal{W}^{\sigma,\lambda,\delta}_{1}(\cdot, \cdot)$ as the basis for new unsupervised models for vector fields. 
	\end{example}

	\begin{figure}
	\centering
	\begin{subfigure}[b]{\textwidth}
		\centering
		\includegraphics[width=0.4\textwidth]{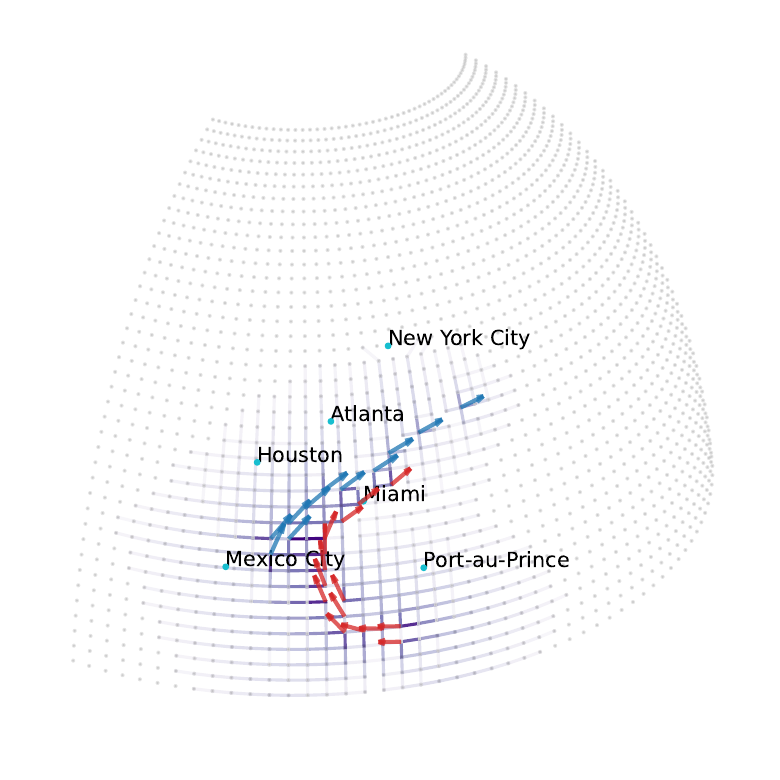}
	\end{subfigure}\\

	\begin{subfigure}[b]{0.3\textwidth}\centering
		\includegraphics[width=\textwidth]{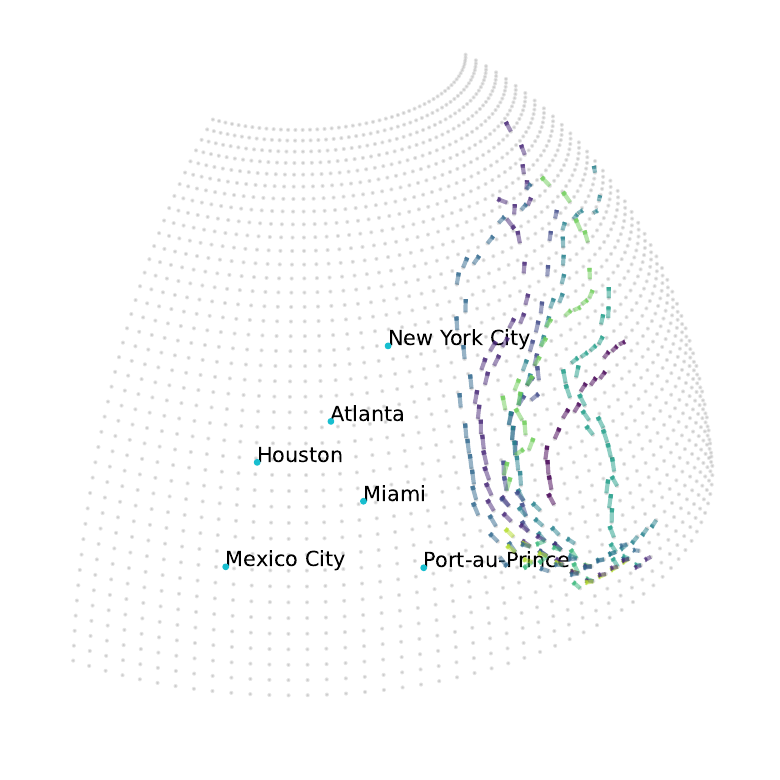}
	\end{subfigure}
	\begin{subfigure}[b]{0.3\textwidth}\centering
		\includegraphics[width=\textwidth]{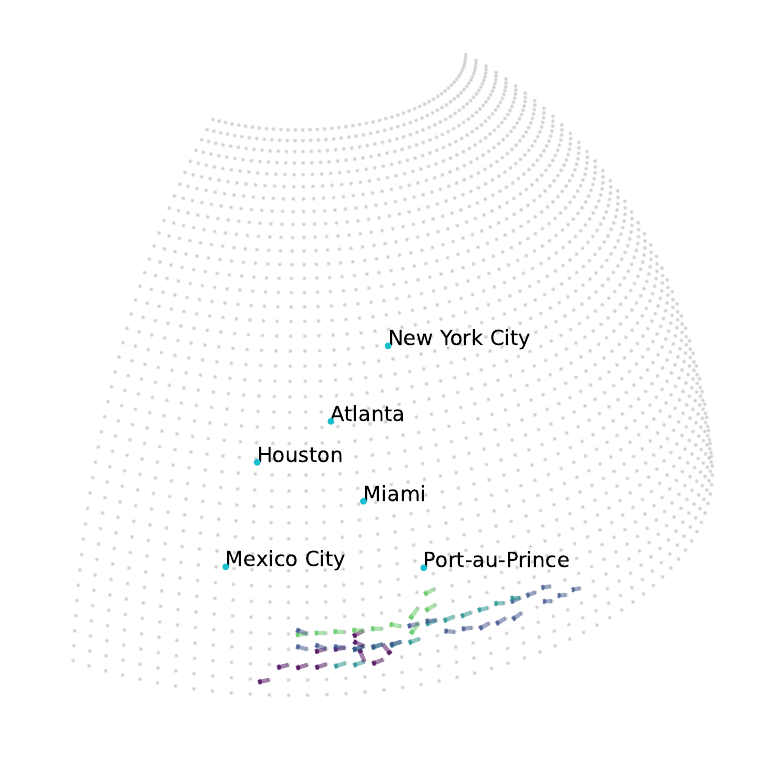}
	\end{subfigure}

	\begin{subfigure}[b]{0.3\textwidth}\centering
		\includegraphics[width=\textwidth]{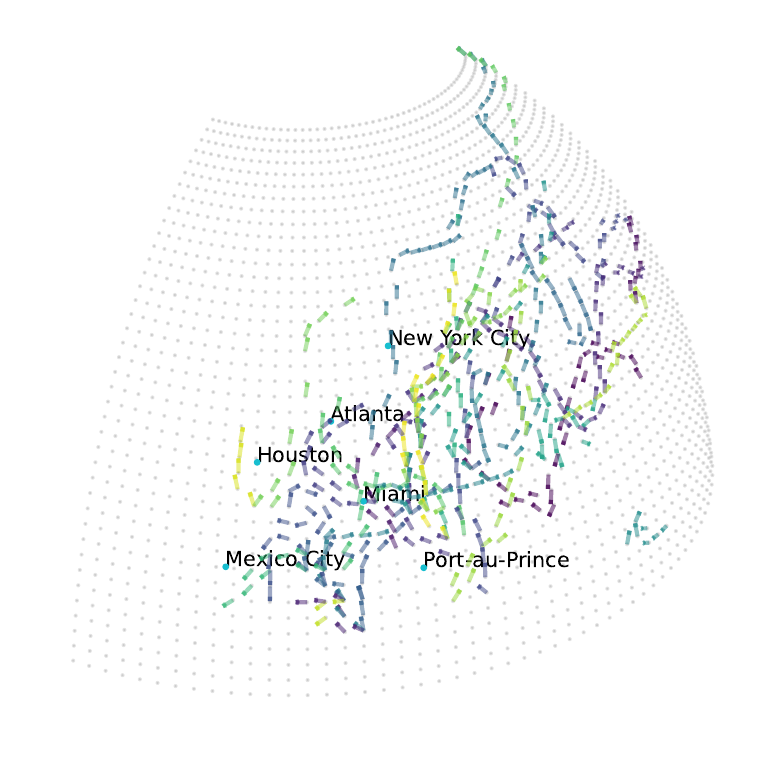}
	\end{subfigure}
	\begin{subfigure}[b]{0.3\textwidth}\centering
		\includegraphics[width=\textwidth]{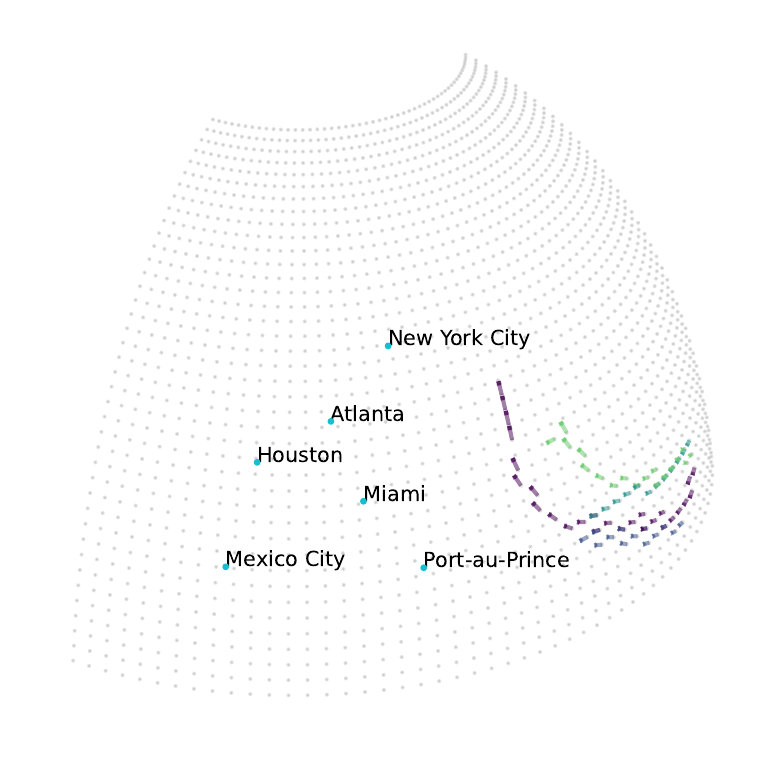}
	\end{subfigure}
	\captionsetup{width=.9\linewidth}
	\caption{(\textbf{Top row}) An illustration of the globe setup detailed in~\cref{ex:hurricanes}. We highlight six major global cities in the North Atlantic region for reference. Two tropical storms from the HURDAT 2 Datset were selected (shown in red and blue, respectively), and their optimal flow $J$ was computed via the Beckmann problem $\mathcal{W}^{\sigma,\lambda,\delta}_{1}(\cdot, \cdot)$ with $\lambda = 10$ and $\delta\approx 7.1\times 10^{-3}$. Edge colors are determined with opacity proportional to $\|J(e)\|$ for $e\in E$ and only ($\gamma=1\times 10^{-2}$)-active edges are shown. (\textbf{Bottom two rows}) We selected 50 tropical storms from between 2005-2020 at random from the HURDAT 2 Dataset, and computed their pairwise connection Beckmann distance $\mathcal{W}^{\sigma,\lambda,\delta}_{1}(\cdot, \cdot)$ with $\lambda = 10$, $\delta\approx 7.1\times 10^{-3}$ using SCS. Then, treating $D = \mathcal{W}^{\sigma,\lambda,\delta}_{1}(\cdot, \cdot)$ as a distance matrix and $\exp(-6\times 10^{-4} D)$ (entrywise) as an affinity matrix, we performed a spectral clustering algorithm using the Scikit-Learn Python package~\cite{scikit-learn} and, with $4$ clusters, found separation between the trajectories based on their geographic location. For each cluster, we render its respective trajectories in different colors.}\label{fig:hurdat-example}
	\end{figure}

	\clearpage

	\section*{Acknowledgments}
	SR and DK were supported by the Halicio\u{g}lu Data Science Institute Ph.D. Fellowship. SR also wishes to acknowledge the 2022 Summer School on Optimal Transport that was supported by the National Science Foundation, Pacific Institute of Mathematical Sciences, and the Institute for Foundations of Data Science.  AC was supported by NSF DMS 2012266 and a gift from Intel. GM was supported by NSF CCF-2217058. The authors also wish to acknowledge the anonymous referees for their feedback and suggestions, which have greatly improved the paper.

	\appendix

	\section{Proofs from Section 2}\label{sec:proofs-appdx}

	\begin{proof}[Proof of~\cref{lemma:isomorphism-of-kernel-of-incidence}]
		The proof consists of three parts: showing that \textit{(i)} $f(1)\in \mathcal{U}^\sigma$ for each $f\in\operatorname{ker}(B)$, \textit{(ii)} $H$ is injective, and \textit{(iii)} $H$ is surjective.
		
		\underline{Step \textit{(i)}}: Fix $ f\in\operatorname{ker}(B^\top)$ and a cycle $C = (i_1,\dotsc, i_{n-1}, i_n = i_1)$. Without loss of generality we may assume that $1\in C$ since if $1\notin C$, due to the connectedness of $G$, there exists a path $P_{i_1, 1}$ with initial node $i_1$ and terminal node $1$. Then by writing
		\begin{align}C' = C + P_{i_1, 1} + P_{i_1, 1}^{-1}\end{align}
		it follows that
		\begin{align}\sigma_{C'} f(1) = \sigma_{C}\sigma_{P_{i_1, 1}}\sigma_{P_{i_1, 1}^{-1}}f(1) = \sigma_Cf(1)\end{align}
		and therefore that $\sigma_C f(1) = f(1)$ if and only if $\sigma_{C'}f(1) = f(1)$. Let $i,j\in V$ be fixed and adjacent. Since $f\in\operatorname{ker}(B^\top)$, it follows that $f(i) = \sigma_{ij}f(j)$. Moreover, if $k\in V$ is such that $k\sim j$, it follows once again that $f(i) = \sigma_{ij}\sigma_{jk}f(k)$. By extending the argument iteratively, we have that for any path $P_{i'j'}$ between (not necessarily adjacent) nodes $i',j'\in V$, it holds $f(i') = \sigma_{P_{i'j'}}f(j')$. Finally, using the fact that $1\in C$, we write $C = P_{i_1, 1} + P_{1, i_1}$ for some (possibly non-unique) sub-paths of $C$ connecting $i_1$ to $1$ and vice versa, respectively. Then
			\begin{align}
				f(1) = \sigma_{P_{1, i_1}}f(i_1) = \sigma_{P_{1, i_1}} \sigma_{P_{i_1, 1}} f(1) = \sigma_{C} f(1),
			\end{align}
		from which it follows that $f(1)\in \mathcal{U}^\sigma$.
		
		\underline{Step \textit{(ii)}}: If $f_1,f_2\in \operatorname{ker}(B^\top )$ then $f_{\ell}(j) = \sigma_{P_{j,1}}f_\ell(1)$ for $\ell=1,2$ and any $j\in V$ where $P_{j,1}$ is some path connecting $j$ to $1$. Then, if $f_1(1) = f_2(1)$ it trivially follows that $f_1(j) = f_2(j)$ for all $j \in V$.
		
		\underline{Step \textit{(iii)}}: To show that $H$ is surjective, we fix any $x\in \mathcal{U}^\sigma$ and construct $f$ by writing $f(1) = x$ and
		\begin{align}\label{eq:embedding-inverse}
			f(i) = \sigma_{P_{i,1}} x
		\end{align}
		for all $i\in V$ and any path $P_{i,1}$ with initial node $i$ and terminal node $1$. This is well defined, for if $P_{i,1}$ and $P_{i,1}'$ are two distinct paths, then $C = P_{i,1}^{-1} + {P'}_{i,1}$ is a cycle and therefore, by the definition of $\mathcal{U}^\sigma$,
			\begin{align}
				\sigma_C x = \sigma_{P_{i,1}^{-1}}\sigma_{P'_{i,1}} x = x,
			\end{align}
		which implies $\sigma_{P_{i,1}} x = \sigma_{P_{i,1} '} x$. Then, for any fixed adjacent nodes $i,j\in V$ we have 
			\begin{align}\label{eq:inverse-of-H}
				f(i) - \sigma_{ij}f(j) = \sigma_{P_{i,1}} x - \sigma_{ij} \sigma_{P_{j,1}} x = (\sigma_{P_{i,1}} - \sigma_{ij} \sigma_{P_{j,1}})x.
			\end{align}
		By defining a new path $P'_{i,1}$ from $i$ to $1$, which first connects $i$ to $j$ and then follows the path $P_{j,1}$, we obtain $\sigma_{ij} \sigma_{P_{j, 1}} = \sigma_{P_{i,1}'}$. Then, by the previous argument, we have $(\sigma_{P_{i,1}} - \sigma_{ij} \sigma_{P_{j,1}})x = 0$ and in turn $f\in\operatorname{ker}(B^\top )$.
	\end{proof}
	
	\begin{proof}[Proof of~\cref{lemma:constant-kernel-vectors}]
		Define $\tau(1) = I_{d}$ and for any $i\in V$, set
		\begin{align}\tau(i) = \sigma_{P_{i, 1}}\end{align}
		where $P_{i,1}$ is a fixed but otherwise arbitrary path with initial node $i$ and terminal node $1$. Then, for any adjacent nodes $i,j$,
		\begin{align}\omega_{ij} = \sigma^{\tau}_{ij} = \sigma_{P_{i, 1}}^{-1}\sigma_{ij}\sigma_{P_{j, 1}}.\end{align}
		Note that $\tau$ need not be unique (e.g., any spanning tree of $G$ yields a choice for $\tau$) but $\omega$ is uniquely determined by a given $\tau$. A useful observation is that if $P = (i_1,\dotsc, i_{n-1}, i_n)$ is any path, then we have
			\begin{align}
				\omega_P &= \sigma_{P_{i_1,1}}^{-1}\sigma_{i_1 i_2}\sigma_{P_{i_2, 1}}\sigma_{P_{i_2, 1}}^{-1}\sigma_{i_2, i_3}\dotsc \sigma_{i_{n-1}, i_n}\sigma_{P_{i_n,1}}\label{eq:path-of-switched-sig1}\\
				&= \sigma_{P_{i_1,1}}^{-1}\sigma_P\sigma_{P_{i_n,1}} = \sigma_{P_{i_1,1}^{-1}}\sigma_P\sigma_{P_{i_n,1}}\label{eq:path-of-switched-sig2}.
			\end{align}
		Thus, $\omega_{P} = \sigma_{C}$ where $C = P_{i_1,1}^{-1} + P + P_{i_n,1}$ is some cycle in $G$. Consequently, all cycle products with connection $\omega$ occur as a subset of cycle products of $\sigma$. Therefore, $\mathcal{U}^{\sigma} \subseteq \mathcal{U}^{\omega}$ which in turn implies that $S(\mathcal{U}^{\sigma}) \subseteq S(\mathcal{U}^{\omega})$.
		
		Now we show that $\operatorname{ker}(B^{\omega^\top }) \subseteq S\left(\mathcal{U}^{\omega}\right)$; to wit, fix $f\in \operatorname{ker}(B^{\omega^\top })$. From the proof of~\cref{lemma:isomorphism-of-kernel-of-incidence}, in particular equation~\cref{eq:embedding-inverse}, we know that $f(1) = x$ for some specific $x\in \mathcal{U}^{\omega}$, and that for each $i\in V$,
		\begin{align}f(i) =  \omega_{P_{i,1}}x\end{align}
		where, recalling the discussion following equation~\cref{eq:embedding-inverse}, because $x$ is invariant under transformations $\omega_C $ for cycles $C$,  the connection term used in the definition of $f(i)$ can be chosen to be along any path starting and ending at $i$ and $1$ respectively--- so here we choose the fixed path $P_{i,1}$ as in the setup of $\tau$. But then by construction, we observe that $\omega$ is equal to the identity on the specified paths $P_{i,1}$ for each $i\in V$ i.e.
		\begin{align}\label{eq:omega_P_i_1}
			\omega_{P_{i,1}}x = \sigma_{P_{i,1}}^\tau x = \tau(i)^{-1}\sigma_{P_{i,1}} I_d\ x = \tau(i)^{-1}\tau(i) x = x.\end{align}
		In other words, $f(i) = x$ for each $i\in V$, thus $f \in S(\mathcal{U}^{\omega})$.
		
		Finally, the reverse inclusion $S\left(\mathcal{U}^{\omega}\right)\subseteq \operatorname{ker}(B^{\omega^\top })$ follows similarly, for if $x\in \mathcal{U}^{\omega}$ then the constant function $f(i) = x$ for all $i \in V$, can be realized as $f\in  \operatorname{ker}(B^{\omega^\top })$ due to~\cref{eq:omega_P_i_1} and~\cref{lemma:isomorphism-of-kernel-of-incidence}.
	\end{proof}

	\begin{proof}[Proof of~\cref{th:feasibility-switched}]
		We first observe that 
		\begin{align}\{\alpha-\beta:\alpha,\beta\in\mathcal{P}_d(V)\} \subseteq \left\{c\in\ell(V;\R^d): \sum_{i\in V}c(i) = 0_{d}\right\} \eqqcolon W,\end{align}
		where $0_d = [0\hspace{.1cm}0\cdots 0]^\top $. Therefore, if the range of $B:\ell(E;\R^d)\rightarrow\ell(V;\R^d)$ contains the set $W$ then $(G,\sigma)$ is feasible. If $B^\sigma$ is the connection incidence matrix of $(G,\sigma)$, then recall that 
		\begin{align}
			\operatorname{Range}(B^\sigma) = \ker(B^{\sigma^\top })^\perp.
		\end{align}
		Therefore, if $W \subseteq \ker(B^{\sigma^\top })^\perp$ then $(G,\sigma)$ is feasible.
		
		Now, by~\cref{lemma:constant-kernel-vectors}, there exists a switching function $\tau:V\rightarrow \mathbb{O}(d)$ such that, with $\omega \coloneqq \sigma^{\tau}$, the kernel of the corresponding connection incidence matrix satisfies $\ker(B^{\omega^\top }) = S(\mathcal{U}^{\omega})$ where $\mathcal{U}^{\omega}$ and $S(\cdot)$ are as in the statements of~\cref{lemma:isomorphism-of-kernel-of-incidence} and~\cref{lemma:constant-kernel-vectors}, respectively. As a result, if $f \in \ker(B^{\omega^\top })$ then there exist $x \in \mathcal{U}^{\omega}$ such that $f(i) = x$ for all $i \in V$. Finally, if $c \in W$, then
		\begin{align}\langle f,c\rangle_{\ell(V;\R^d)} = \sum_{i\in V} \langle f(i), c(i)\rangle_{\R^d} = \left\langle x, 0_d\right\rangle_{\R^d} = 0.\end{align}
		Therefore $c \in \ker(B^{\omega^\top })^\perp$, whence $W \subseteq \ker(B^{\omega^\top })^\perp$ and thus $(G,\omega)$ is feasible.
	\end{proof}


\begin{thebibliography}{10}

		\bibitem{bandeira2013cheeger}
		{\sc A.~S. Bandeira, A.~Singer, and D.~A. Spielman}, {\em {A Cheeger inequality for the graph connection Laplacian}}, SIAM Journal on Matrix Analysis and Applications, 34 (2013), pp.~1611--1630.
		
		\bibitem{barbero2022sheaf}
		{\sc F.~Barbero, C.~Bodnar, H.~S. de~Oc{\'a}riz~Borde, M.~Bronstein, P.~Veli{\v{c}}kovi{\'c}, and P.~Lio}, {\em {Sheaf neural networks with connection Laplacians}}, in Topological, Algebraic and Geometric Learning Workshops 2022, PMLR, 2022, pp.~28--36.
		
		\bibitem{beckmann1952continuous}
		{\sc M.~Beckmann}, {\em A continuous model of transportation}, Econometrica: Journal of the Econometric Society,  (1952).
		
		\bibitem{bhamre2015orthogonal}
		{\sc T.~Bhamre, T.~Zhang, and A.~Singer}, {\em {Orthogonal matrix retrieval in cryo-electron microscopy}}, in 2015 IEEE 12th International Symposium on Biomedical Imaging (ISBI), IEEE, 2015, pp.~1048--1052.
		
		\bibitem{bian2018survey}
		{\sc J.~Bian, D.~Tian, Y.~Tang, and D.~Tao}, {\em A survey on trajectory clustering analysis}, arXiv preprint arXiv:1802.06971,  (2018).
		
		\bibitem{bonneel2023survey}
		{\sc N.~Bonneel and J.~Digne}, {\em {A survey of optimal transport for computer graphics and computer vision}}, Computer Graphics Forum, 42 (2023), pp.~439--460.
		
		\bibitem{BV-04}
		{\sc S.~P. Boyd and L.~Vandenberghe}, {\em {Convex optimization}}, Cambridge university press, 2004.
		
		\bibitem{cartwright1956structural}
		{\sc D.~Cartwright and F.~Harary}, {\em {Structural balance: a generalization of Heider's theory}}, Psychological review, 63 (1956), p.~277.
		
		\bibitem{CZK-14}
		{\sc F.~Chung, W.~Zhao, and M.~Kempton}, {\em {Ranking and sparsifying a connection graph}}, Internet Math., 10 (2014).
		
		\bibitem{cloninger2023linearized}
		{\sc A.~Cloninger, K.~Hamm, V.~Khurana, and C.~Moosm{\"u}ller}, {\em {Linearized Wasserstein dimensionality reduction with approximation guarantees}}, arXiv preprint arXiv:2302.07373,  (2023).
		
		\bibitem{cloninger2024random}
		{\sc A.~Cloninger, G.~Mishne, A.~Oslandsbotn, S.~J. Robertson, Z.~Wan, and Y.~Wang}, {\em Random walks, conductance, and resistance for the connection graph laplacian}, SIAM Journal on Matrix Analysis and Applications, 45 (2024).
		
		\bibitem{cloninger2019people}
		{\sc A.~Cloninger, B.~Roy, C.~Riley, and H.~M. Krumholz}, {\em {People mover's distance: class level geometry using fast pairwise data adaptive transportation costs}}, Applied and Computational Harmonic Analysis, 47 (2019), pp.~248--257.
		
		\bibitem{cvxpy}
		{\sc S.~Diamond and S.~Boyd}, {\em {CVXPY}: A {P}ython-embedded modeling language for convex optimization}, Journal of Machine Learning Research,  (2016), \url{https://stanford.edu/~boyd/papers/pdf/cvxpy_paper.pdf}.
		\newblock To appear.
		
		\bibitem{ES-18}
		{\sc M.~Essid and J.~Solomon}, {\em {Quadratically regularized optimal transport on graphs}}, SIAM J. Sci. Comput., 40 (2018).
		
		\bibitem{farmer1994extreme}
		{\sc J.~D. Farmer}, {\em {Extreme points of the unit ball of the space of Lipschitz functions}}, Proceedings of the American Mathematical Society, 121 (1994), pp.~807--813.
		
		\bibitem{feldman2002monge}
		{\sc M.~Feldman and R.~McCann}, {\em {Monge's transport problem on a Riemannian manifold}}, Transactions of the American Mathematical Society, 354 (2002), pp.~1667--1697.
		
		\bibitem{ferradans2014regularized}
		{\sc S.~Ferradans, N.~Papadakis, G.~Peyr{\'e}, and J.-F. Aujol}, {\em {Regularized discrete optimal transport}}, SIAM Journal on Imaging Sciences, 7 (2014), pp.~1853--1882.
		
		\bibitem{ferreira2013vector}
		{\sc N.~Ferreira, J.~T. Klosowski, C.~E. Scheidegger, and C.~T. Silva}, {\em Vector field k-means: Clustering trajectories by fitting multiple vector fields}, in Computer Graphics Forum, vol.~32, Wiley Online Library, 2013, pp.~201--210.
		
		\bibitem{gao2022master}
		{\sc Y.~Gao, W.~Li, and J.-G. Liu}, {\em Master equations for finite state mean field games with nonlinear activations}, arXiv preprint arXiv:2212.05675,  (2022).
		
		\bibitem{huguet2022manifold}
		{\sc G.~Huguet, D.~S. Magruder, A.~Tong, O.~Fasina, M.~Kuchroo, G.~Wolf, and S.~Krishnaswamy}, {\em Manifold interpolating optimal-transport flows for trajectory inference}, Advances in neural information processing systems, 35 (2022), pp.~29705--29718.
		
		\bibitem{huroyan2020solving}
		{\sc V.~Huroyan, G.~Lerman, and H.-T. Wu}, {\em {Solving jigsaw puzzles by the graph connection Laplacian}}, SIAM Journal on Imaging Sciences, 13 (2020), pp.~1717--1753.
		
		\bibitem{kantorovich1958onaspace}
		{\sc L.~Kantorovich and G.~S. Rubinstein}, {\em On a space of totally additive functions}, Vestnik Leningrad. Univ, 13 (1958), pp.~52--59.
		
		\bibitem{kantorovich1942translocation}
		{\sc L.~V. Kantorovich}, {\em {On the translocation of masses}}, in Dokl. Akad. Nauk. USSR (NS), vol.~37, 1942, pp.~199--201.
		
		\bibitem{khurana2023supervised}
		{\sc V.~Khurana, H.~Kannan, A.~Cloninger, and C.~Moosm{\"u}ller}, {\em {Supervised learning of sheared distributions using linearized optimal transport}}, Sampling Theory, Signal Processing, and Data Analysis, 21 (2023), p.~1.
		
		\bibitem{landsea2015revised}
		{\sc C.~Landsea, J.~Franklin, and J.~Beven}, {\em The revised atlantic hurricane database (hurdat2)}, NOAA/NHC.[Available online at nhc. noaa. gov.],  (2015).
		
		\bibitem{leite2022community}
		{\sc D.~Leite, D.~Baptista, A.~A. Ibrahim, E.~Facca, and C.~De~Bacco}, {\em {Community detection in networks by dynamical optimal transport formulation}}, Scientific Reports, 12 (2022), p.~16811.
		
		\bibitem{li2016fast}
		{\sc W.~Li, S.~Osher, and W.~Gangbo}, {\em A fast algorithm for earth mover's distance based on optimal transport and l1 type regularization}, arXiv preprint arXiv:1609.07092,  (2016).
		
		\bibitem{li2018parallel}
		{\sc W.~Li, E.~K. Ryu, S.~Osher, W.~Yin, and W.~Gangbo}, {\em A parallel method for earth mover’s distance}, Journal of Scientific Computing, 75 (2018), pp.~182--197.
		
		\bibitem{lieb2004fluxes}
		{\sc E.~H. Lieb and M.~Loss}, {\em {Fluxes, Laplacians, and Kasteleyn's theorem}}, 2004.
		
		\bibitem{ling2007efficient}
		{\sc H.~Ling and K.~Okada}, {\em {An efficient earth mover's distance algorithm for robust histogram comparison}}, IEEE transactions on pattern analysis and machine intelligence, 29 (2007), pp.~840--853.
		
		\bibitem{liu2021multilevel}
		{\sc J.~Liu, W.~Yin, W.~Li, and Y.~T. Chow}, {\em Multilevel optimal transport: a fast approximation of wasserstein-1 distances}, SIAM Journal on Scientific Computing, 43 (2021), pp.~A193--A220.
		
		\bibitem{lonardi2022multicommodity}
		{\sc A.~Lonardi, M.~Putti, and C.~De~Bacco}, {\em {Multicommodity routing optimization for engineering networks}}, Scientific reports, 12 (2022), p.~7474.
		
		\bibitem{marchese2019multimaterial}
		{\sc A.~Marchese, A.~Massaccesi, and R.~Tione}, {\em A multimaterial transport problem and its convex relaxation via rectifiable g-currents}, SIAM Journal on Mathematical Analysis, 51 (2019), pp.~1965--1998.
		
		\bibitem{monge1781memoire}
		{\sc G.~Monge}, {\em {M{\'e}moire sur la th{\'e}orie des d{\'e}blais et des remblais}}, Mem. Math. Phys. Acad. Royale Sci.,  (1781), pp.~666--704.
		
		\bibitem{moosmuller2023linear}
		{\sc C.~Moosm{\"u}ller and A.~Cloninger}, {\em {Linear optimal transport embedding: provable Wasserstein classification for certain rigid transformations and perturbations}}, Information and Inference: A Journal of the IMA, 12 (2023), pp.~363--389.
		
		\bibitem{odonoghue2016conic}
		{\sc B.~O'Donoghue, E.~Chu, N.~Parikh, and S.~Boyd}, {\em Conic optimization via operator splitting and homogeneous self-dual embedding}, Journal of Optimization Theory and Applications, 169 (2016), pp.~1042--1068, \url{http://stanford.edu/~boyd/papers/scs.html}.
		
		\bibitem{papadakis2015optimal}
		{\sc N.~Papadakis}, {\em {Optimal transport for image processing}}, PhD thesis, Universit{\'e} de Bordeaux, 2015.
		
		\bibitem{paszke2017automatic}
		{\sc A.~Paszke, S.~Gross, S.~Chintala, G.~Chanan, E.~Yang, Z.~DeVito, Z.~Lin, A.~Desmaison, L.~Antiga, and A.~Lerer}, {\em Automatic differentiation in pytorch}, in NIPS-W, 2017.
		
		\bibitem{scikit-learn}
		{\sc F.~Pedregosa, G.~Varoquaux, A.~Gramfort, V.~Michel, B.~Thirion, O.~Grisel, M.~Blondel, P.~Prettenhofer, R.~Weiss, V.~Dubourg, J.~Vanderplas, A.~Passos, D.~Cournapeau, M.~Brucher, M.~Perrot, and E.~Duchesnay}, {\em Scikit-learn: Machine learning in {P}ython}, Journal of Machine Learning Research, 12 (2011), pp.~2825--2830.
		
		\bibitem{PC-19}
		{\sc G.~Peyré and M.~Cuturi}, {\em {Computational optimal transport}}, Foundations and Trends in Machine Learning, 11 (2019), pp.~355--607.
		
		\bibitem{reichenbach2015v}
		{\sc A.~Reichenbach, M.~Goldau, C.~Heine, and M.~Hlawitschka}, {\em V--bundles: clustering fiber trajectories from diffusion mri in linear time}, in Medical Image Computing and Computer-Assisted Intervention--MICCAI 2015: 18th International Conference, Munich, Germany, October 5-9, 2015, Proceedings, Part I 18, Springer, 2015, pp.~191--198.
		
		\bibitem{RCLO-18}
		{\sc E.~K. Ryu, Y.~Chen, W.~Li, and S.~Osher}, {\em {Vector and matrix optimal mass transport: theory, algorithm, and applications}}, SIAM J. Sci. Comput., 40 (2018).
		
		\bibitem{S-15}
		{\sc F.~Santambrogio}, {\em {Optimal transport for applied mathematicians}}, Birk{\"a}user, NY, 55 (2015), p.~94.
		
		\bibitem{singer2011angular}
		{\sc A.~Singer}, {\em {Angular synchronization by eigenvectors and semidefinite programming}}, Applied and computational harmonic analysis, 30 (2011), pp.~20--36.
		
		\bibitem{singer2012vector}
		{\sc A.~Singer and H.-T. Wu}, {\em {Vector diffusion maps and the connection Laplacian}}, Communications on pure and applied mathematics, 65 (2012), pp.~1067--1144.
		
		\bibitem{solomon2018optimal}
		{\sc J.~Solomon}, {\em {Optimal transport on discrete domains}}, AMS Short Course on Discrete Differential Geometry,  (2018).
		
		\bibitem{solomon2014earth}
		{\sc J.~Solomon, R.~Rustamov, L.~Guibas, and A.~Butscher}, {\em {Earth mover's distances on discrete surfaces}}, ACM Trans. Graph., 33 (2014).
		
		\bibitem{solomon2019optimal}
		{\sc J.~Solomon and A.~Vaxman}, {\em Optimal transport-based polar interpolation of directional fields}, ACM Transactions on Graphics (TOG), 38 (2019), pp.~1--13.
		
		\bibitem{turk1994zippered}
		{\sc G.~Turk and M.~Levoy}, {\em The {S}tanford 3{D} {S}canning {R}epository}, \url{http://graphics.stanford.edu/data/3Dscanrep/}.
		
		\bibitem{xu2019quadratic}
		{\sc X.~Xu, L.~Siyao, W.~Sun, Q.~Yin, and M.-H. Yang}, {\em Quadratic video interpolation}, Advances in Neural Information Processing Systems, 32 (2019).
		
		\end{thebibliography}
\end{document}